\documentclass[11pt]{amsart}
\usepackage{amsmath,amssymb,amsthm}
\usepackage[margin=1.15in]{geometry}
\usepackage{enumitem}
\usepackage{xcolor}
\usepackage[colorlinks=true,linkcolor=blue,citecolor=blue,urlcolor=blue]{hyperref}
\usepackage[color=yellow!30]{todonotes}

\usepackage{mathrsfs}

\newtheorem{theorem}{Theorem}[section]
\newtheorem{lemma}[theorem]{Lemma}

\newtheorem{corollary}[theorem]{Corollary}
\newtheorem{claim}[theorem]{Claim}
\theoremstyle{definition}
\newtheorem{definition}[theorem]{Definition}
\newtheorem{observation}[theorem]{Observation}
\newtheorem{remark}[theorem]{Remark}

\newtheorem{question}[theorem]{Question}

\newcommand{\Fin}{\mathrm{Fin}}
\newcommand{\fin}{\mathrm{fin}}
\newcommand{\Pw}{\mathcal{P}}
\newcommand{\hgt}{\operatorname{ht}}
\newcommand{\supp}{\operatorname{supp}}
\newcommand{\otp}{\operatorname{otp}}
\newcommand{\cf}{\operatorname{cf}}
\newcommand{\eqs}{\equiv^{*}}
\newcommand{\force}{\Vdash}
\newcommand{\rest}{\restriction}
\newcommand{\nm}[1]{\dot{#1}}          
\newcommand{\FG}{F_{G}}

\title[A non-trivially non-trivial automorphism]{A non-trivially non-trivial automorphism at an inaccessible cardinal}
\author{M\'{a}rk Po\'{o}r}
\address{Department of Mathematics, Cornell University, 310 Malott Hall, Ithaca, NY 14853-4201, USA}
\address{HUN-REN Alfr\'ed R\'enyi Institute of Mathematics, Re\'altanoda utca 13--15, H-1053 Budapest, Hungary}
\email{sokmark@renyi.hu}
\author{Jing Zhang}
\address{Department of Mathematics, University of North Texas, 1155 Union Circle \#311430, Denton, TX 76203-5017, USA}
\email{jing.zhang2@unt.edu}
\date{}

\begin{document}

\begin{abstract}
We show that, relative to the existence of an inaccessible cardinal, it is consistent that for an inaccessible cardinal $\kappa$, there is a non-trivial automorphism of $\Pw(\kappa)/\Fin$
whose restriction to $\Pw(\beta)/\Fin$ is induced by a bijection on $\beta$ for every $\beta<\kappa$. This answers questions posed by Larson--McKenney, Shelah--Stepr\=ans, and Veli\v{c}kovi\'{c}.
\end{abstract}

\maketitle

\section{Introduction}\label{sec:intro}

Let $X$ be a set and let $I\subseteq\Pw(X)$ be a proper ideal on $X$. An
\emph{automorphism} of $\Pw(X)/I$ is an isomorphism from the Boolean algebra
$(\Pw(X)/I,\wedge,\vee,\neg,0,1)$ onto itself, where
$0=[\emptyset]_I$ and $1=[X]_I$. Such an automorphism $f$ is \emph{trivial}
if there is a function $g\colon X\to X$ such that
$f([A]_I)=[g[A]]_I$ for every $[A]_I\in\Pw(X)/I$. We shall pay special
attention to the ideal of finite sets, denoted by $\Fin$. More generally, for
every infinite cardinal $\lambda$, let $I_\lambda$ be the ideal of subsets of
$X$ of cardinality ${<}\lambda$.

In the 1950s, Walter Rudin \cite{Rudin1,Rudin2} initiated the study of
automorphisms of $\Pw(\omega)/\Fin$ and proved that the Continuum Hypothesis
implies the existence of a non-trivial automorphism of this algebra. His
motivation was to construct a non-trivial autohomeomorphism of
$\beta\mathbb N\setminus\mathbb N$; Stone duality gives the equivalent
formulation in terms of $\Pw(\omega)/\Fin$. A major breakthrough was made by
Shelah \cite{properforcing}, who proved that it is consistent with ZFC that
every automorphism of $\Pw(\omega)/\Fin$ is trivial. Later, Veli\v{c}kovi\'{c}
\cite{Velickovic} derived the same conclusion from Todorcevic's Open Coloring
Axiom ($\mathrm{OCA}_{\mathrm T}$) together with Martin's Axiom
($\mathrm{MA}_{\aleph_1}$). A recent improvement by De Bondt, Farah, and
Vignati \cite{debondt2024trivialisomorphismsreducedproducts} removes the
assumption $\mathrm{MA}_{\aleph_1}$ and derives the conclusion from
$\mathrm{OCA}_{\mathrm T}$ alone. The study of $\Pw(\omega)/I$ for other
ideals $I$ on $\omega$ is closely connected with descriptive set theory and
has developed into a rich subject; see \cite{FarahBook}. A parallel and very
successful line of research concerns automorphisms of the Calkin algebra
$\mathcal B(H)/\mathcal K(H)$, where $H$ is a separable infinite-dimensional
Hilbert space and $\mathcal B(H)$ and $\mathcal K(H)$ denote, respectively,
the bounded and compact operators on $H$. Results of Phillips--Weaver \cite{PhillipsWeaver} and Farah \cite{FarahInner} show that the existence of an outer automorphism of the Calkin
algebra is independent of ZFC (a well-known question asked in \cite{CalkinQuestion}). We refer the reader to \cite{FarahBookNew} for further
details.

The main objects studied in this paper are automorphisms of
$\Pw(\kappa)/\Fin$, where $\kappa$ is an infinite cardinal. Veli\v{c}kovi\'{c}
\cite{Velickovic} showed that $\mathrm{OCA}_{\mathrm T}$ and
$\mathrm{MA}_{\aleph_1}$ imply that all automorphisms of
$\Pw(\omega)/\Fin$ and $\Pw(\omega_1)/\Fin$ are trivial. He also showed that
the Proper Forcing Axiom ($\mathrm{PFA}$) implies the corresponding statement
for every $\kappa\geq\omega$. It is therefore natural to ask whether
$\mathrm{OCA}_{\mathrm T}+\mathrm{MA}_{\aleph_1}$ suffices for all
$\kappa$. In the final paragraph of \cite[p.~13]{Velickovic}, Veli\v{c}kovi\'{c}
conjectured that a non-trivial automorphism of
$\Pw(\omega_2)/\Fin$ can be constructed from $\mathrm{MA}_{\aleph_1}$
together with $\diamondsuit^*(\omega_2)$ and $\square_{\omega_1}$, the latter
two of which hold in G\"odel's constructible universe. However, later work
of Shelah and Stepr\={a}ns \cite{ShSt16} and Larson and McKenney \cite{LM16}
shows in ZFC that, for every $\kappa$ below the first inaccessible cardinal,
an automorphism $\Phi\colon\Pw(\kappa)/\Fin\to\Pw(\kappa)/\Fin$ is trivial
provided it is trivial on $\Pw(X)/\Fin$ for every
$X\in[\kappa]^{\aleph_1}$. Here $\Phi$ is \emph{trivial on}
$\Pw(X)/\Fin$ if its restriction to that algebra is induced by a function
from $X$ to $\kappa$. This refutes the above conjecture. The two
distinguished cardinals in these results are $\omega_1$ and the first
inaccessible cardinal.

This prompted Larson and McKenney to ask the following:
\begin{question}[{\cite[Question 1 (a)]{LM16}}]\label{question: LM}
Is it consistent that there exist an uncountable $\kappa$ and an automorphism on $P(\kappa)/\fin$ which is not trivial on any cocountable set?
\end{question}

\begin{question}[{\cite[Question 1 (g)]{LM16}}]\label{question: LM2}
Is it consistent that there are uncountable cardinals $\lambda\leq \kappa$ and a nontrivial automorphism on $P(\kappa)/I_\lambda$?
\end{question}

Shelah and Stepr\={a}ns made progress towards Question \ref{question: LM} concerning $\omega_1$.

\begin{definition}[Definition 7.1, \cite{ShSt16}]
Let $\kappa$ be a cardinal. We say an automorphism $\Phi: P(\kappa)/\fin \to P(\kappa)/\fin$ is \emph{non-trivially non-trivial} if it is non-trivial and for any $X\in [\kappa]^{<\kappa}$, $\Phi\restriction P(X)/\fin$ is trivial.
\end{definition}

It was shown in \cite{ShStomega1} that it is consistent that there is a non-trivially non-trivial automorphism on $P(\omega_1)/\fin$. 
The remaining case concerns the situation at an inaccessible cardinal.

\begin{question}[{\cite[Question 7.4]{ShSt16}}, {\cite[Question 5.2]{ShStomega1}}]\label{question: SS}
Is it consistent, relative to the consistency of an inaccessible cardinal, that there is a non-trivially non-trivial automorphism of
$P(\kappa)/\fin$ where $\kappa$ is inaccessible?
\end{question}

\begin{question}[Veli\v{c}kovi\'{c}, {\cite[p.~167]{ShSt16}}]\label{question: V}
Do $\mathrm{MA}_{\aleph_1}$ and $\mathrm{OCA}_{\mathrm{T}}$ imply that,
for every cardinal $\kappa\geq\omega$, every automorphism of
$P(\kappa)/\fin$ is trivial?
\end{question}

This question is especially interesting in light of the following theorem: 

\begin{theorem}[see {\cite[Corollary 1.2]{ShSt16}}]
$\mathrm{MA}_{\aleph_1}$ and $\mathrm{OCA}_{\mathrm{T}}$ imply that, if $\kappa$ is less than the first inaccessible cardinal, then every automorphism of $P(\kappa)/\fin$ is trivial.
\end{theorem}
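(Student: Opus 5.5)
The plan is to combine Veli\v{c}kovi\'{c}'s base case with the Shelah--Stepr\={a}ns / Larson--McKenney local-to-global result, so no new combinatorics should be needed.

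\textbf{Step 1 (base case).} Let $\Phi$ be an automorphism of $\Pw(\kappa)/\Fin$, with $\kappa$ below the first inaccessible. Fix $X\in[\kappa]^{\aleph_1}$. Since $|X|=\aleph_1$, fix a bijection $e\colon\omega_1\to X$, so that $\Pw(X)/\Fin\cong\Pw(\omega_1)/\Fin$. By Veli\v{c}kovi\'{c}'s theorem, $\mathrm{OCA}_{\mathrm T}+\mathrm{MA}_{\aleph_1}$ implies that every automorphism of $\Pw(\omega_1)/\Fin$ is trivial.

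\textbf{Step 2 (triviality on small sets).} The restriction $\Phi\restriction\Pw(X)/\Fin$ is an embedding, not an automorphism. Its image lives in $\Pw(Y)/\Fin$, where $[Y]=\Phi([X])$. To transfer the base case, I would use the Boolean algebra isomorphism $\Pw(X)/\Fin\cong\Pw(Y)/\Fin$ that $\Phi$ induces. Since $\Phi$ is an automorphism of the whole algebra, the image of the downset below $[X]$ is exactly the downset below $[Y]$, so this is indeed an isomorphism. Because $Y$ is uncountable, $|Y|$ can be forced down to $\aleph_1$, giving an isomorphism between two copies of $\Pw(\omega_1)/\Fin$. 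Composing with bijections to $\omega_1$ yields an automorphism of $\Pw(\omega_1)/\Fin$, which is trivial by Step 1. Pulling back, $\Phi$ is induced on $\Pw(X)/\Fin$ by a function $X\to\kappa$.

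\textbf{Main obstacle: the size of $Y$.} This is the subtle point, since $Y$ is only determined modulo finite. I would argue as follows. $\Pw(X)/\Fin$ has the $2^{\aleph_1}$ pairwise almost disjoint family structure of $\aleph_1$ many disjoint infinite pieces. These have disjoint images, so $|Y|\ge\aleph_1$. For the upper bound, if $|Y|>\aleph_1$, pick $Z\subseteq Y$ of size $\aleph_1$. Then $\Phi^{-1}$ applied to $Z$ gives a subset of $X$ on which the same argument applies. Alternatively, one can invoke the version of Veli\v{c}kovi\'{c}'s result for isomorphisms between $\Pw(\omega_1)/\Fin$ and $\Pw(A)/\Fin$ directly, or simply use the PFA-style proof restricted to $\aleph_1$-sized sets.

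\textbf{Step 3 (conclusion).} With triviality on every $X\in[\kappa]^{\aleph_1}$ established, the theorem of Shelah--Stepr\={a}ns and Larson--McKenney quoted above applies, since $\kappa$ is below the first inaccessible. Hence $\Phi$ is trivial.
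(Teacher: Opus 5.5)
Your overall route (Veli\v{c}kovi\'{c}'s theorem on $\aleph_1$-sized sets, followed by the Shelah--Stepr\={a}ns/Larson--McKenney local-to-global theorem) is the same combination of results the paper cites. You are also right that $\Phi\rest\Pw(X)/\Fin$ is only an isomorphism onto $\Pw(Y)/\Fin$ with $[Y]=\Phi([X])$. So you need $|Y|=\aleph_1$ before the result about automorphisms of $\Pw(\omega_1)/\Fin$ applies. Your justification of $|Y|=\aleph_1$ does not work, for three reasons.
First, $Y$ is determined up to a finite set, so its size cannot be ``forced down''.
Second, the lower bound fails. $\Phi$ sends the $\aleph_1$ disjoint pieces of $X$ only to \emph{almost} disjoint sets, and a countable set carries almost disjoint families of size $2^{\aleph_0}\ge\aleph_1$. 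As a sanity check, your argument uses no hypothesis at all. It would therefore prove in ZFC that $\Pw(\omega_1)/\Fin\not\cong\Pw(\omega)/\Fin$, i.e.\ that $\omega_1^*$ and $\omega^*$ are not homeomorphic, which is the open Katowice problem.
Third, the upper bound is circular. Pulling back some $Z\subseteq Y$ of size $\aleph_1$ gives a subset of $X$, but no contradiction with $|Y|>\aleph_1$. The ``alternatively'' clauses simply assume a stronger form of Veli\v{c}kovi\'{c}'s theorem than the one you state in Step 1.

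The missing ingredient is a genuine use of $\mathrm{MA}_{\aleph_1}$. It implies that every $\subseteq^*$-decreasing $\omega_1$-sequence of infinite subsets of a countable set has an infinite pseudo-intersection. Now take any injective $e\colon\omega_1\to B$. The sequence $\langle e[\omega_1\setminus\alpha]:\alpha<\omega_1\rangle$ has no infinite pseudo-intersection: such an $S$ would meet every $e[\alpha]$ finitely, impossible since $\cf(\omega_1)>\omega$. So it is a tower of length $\omega_1$ in $\Pw(B)/\Fin$, and the isomorphisms between downsets induced by $\Phi$ preserve towers. This gives three facts.
(i) If $A\subseteq\kappa$ is countable, then $\Phi([A])$ is represented by a countable set; otherwise pull such a tower back into $\Pw(A)/\Fin$.
(ii) $Y$ is uncountable; otherwise push the tower of $X$ forward.
(iii) Fix a bijection $e\colon\omega_1\to X$ and put $[Y_\alpha]:=\Phi([e[\alpha]])$ with $Y_\alpha\subseteq Y$. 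Each $Y_\alpha$ is countable by (i). Moreover $Y\setminus\bigcup_{\alpha<\omega_1}Y_\alpha$ is finite: for an infinite subset $Z$ of it, $\Phi^{-1}([Z])=[W]$ with $W\subseteq^* X$ infinite and $W\cap e[\alpha]$ finite for every $\alpha$, which is again impossible as $\cf(\omega_1)>\omega$.
Thus $|Y|=\aleph_1$, and the rest of your Step 2 and your Step 3 go through as written.
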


The credit for the theorem is as follows:
	\begin{enumerate}
		\item Veli\v{c}kovi\'{c} \cite{Velickovic} showed that $\mathrm{MA}_{\aleph_1}$ and $\mathrm{OCA}_{\mathrm{T}}$ imply the statement when $\kappa=\omega$ and $\kappa=\omega_1$;
	\item Shelah and Stepr\={a}ns \cite{ShSt16} showed that any automorphism $\Phi$ on $P(\kappa)/\fin$ is trivial on $P(X)/\fin$ where $X$ is a subset of $\kappa$ satisfying $|\kappa-X|\leq 2^{\aleph_0}$;
		\item Larson and McKenney \cite{LM16} showed that any automorphism $\Phi$ on $P(\kappa)/\fin$ is trivial given that $\kappa\leq 2^{\aleph_0}$ and $\Phi\restriction P(X)/\fin$ is trivial for any $X\in [\kappa]^{\aleph_1}$.
	\end{enumerate}

We answer all these questions in this paper. In particular, we answer Questions 
\ref{question: LM}, \ref{question: LM2}, and \ref{question: SS} positively and Question \ref{question: V} negatively. The main result of this paper is: 

\begin{theorem}\label{theorem: main}
Relative to the existence of an inaccessible cardinal $\kappa$, it is consistent that there is a non-trivially non-trivial automorphism $\Phi$ on $P(\kappa)/\fin$. Further, for every infinite cardinal $\lambda\leq\kappa$, $\Phi$ induces an automorphism $\Phi_\lambda$ on $P(\kappa)/I_\lambda$ such that, for every $X\in[\kappa]^\kappa$, $\Phi_\lambda\restriction P(X)/I_\lambda$ is nontrivial.
\end{theorem}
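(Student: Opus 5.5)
Start with $\kappa$ inaccessible in a ground model $V$ satisfying GCH. Force with a ${<}\kappa$-closed (or at least $\kappa$-strategically closed) poset $\mathbb{P}$ whose conditions are pairs $(\beta,\sigma)$, with $\beta<\kappa$ and $\sigma$ a permutation of $\beta$; the permutations are required to cohere modulo finite, and extension is end-extension. More precisely, a condition is a sequence $\langle \sigma_\alpha:\alpha\le\beta\rangle$ of bijections $\sigma_\alpha\colon\alpha\to\alpha$ such that for $\alpha<\alpha'$ the maps $\sigma_{\alpha'}\restriction\alpha$ and $\sigma_\alpha$ differ on only finitely many points. Coherence modulo finite means that the restrictions of $\Pw(\alpha)/\Fin$ induced by the $\sigma_\alpha$ fit together. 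Given a generic $G$, define $\Phi([A])=\bigcup_{\alpha<\kappa}[\sigma_\alpha[A\cap\alpha]]$. To make this well defined, note that for every $A\subseteq\kappa$ (which lies in an intermediate model by closure) the sets $\sigma_\alpha[A\cap\alpha]$ cohere modulo finite. Their "limit" is therefore determined modulo finite only if the finite errors stabilize, and they need not. The fix is to define $\Phi([A])$ as the set $B$ with $B\cap\alpha=^*\sigma_\alpha[A\cap\alpha]$ for all $\alpha$ and to prove by genericity that such a $B$ exists and is unique mod finite. Uniqueness is easy. Existence for $A\in V[G\restriction\beta]$ uses a density argument: extend conditions so that the finite discrepancies on $A$ are eventually zero beyond some stage, choosing $\sigma$ above $\beta$ to agree exactly with earlier maps on $A$. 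This is the delicate coding, so I would instead build $\mathbb{P}$ so that each $\sigma_\alpha$ is an involution and conditions carry promises.

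Next, check that $\Phi$ is a Boolean homomorphism and a bijection. It preserves $\wedge,\neg$ because each $\sigma_\alpha$ does modulo finite, and its inverse comes from $\sigma_\alpha^{-1}$. Non-trivial non-triviality on $X\in[\kappa]^{<\kappa}$ is immediate: $X\subseteq\alpha$ for some $\alpha$, and $\Phi\restriction\Pw(X)/\Fin$ is induced by $\sigma_\alpha$. The residual properties from $\mathbb{P}$ being ${<}\kappa$-distributive show that $\kappa$ stays inaccessible and that no new subsets of smaller sets are added.

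The main obstacle is showing that $\Phi$ restricted to any $X\in[\kappa]^\kappa$ is not trivial, i.e.\ killing every potential $g\colon X\to\kappa$. I would argue by a density/genericity argument using a name $\dot g$ together with a $\diamondsuit$-like reflection. By inaccessibility and $\kappa$-c.c.-style counting of nice names, there are club many $\delta$ where $\dot g\restriction\delta$ and $\dot X\cap\delta$ are decided by $G\restriction\delta$. At such a $\delta$ of cofinality $\omega$, the coherent sequence $\langle\sigma_\alpha:\alpha<\delta\rangle$ admits many non-equivalent extensions $\sigma_\delta$, and we choose one that disagrees with $g$ on infinitely many points of $X\cap\delta$. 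This is possible because the only constraint is coherence with $\sigma_\alpha$ mod finite for each $\alpha<\delta$, and along an $\omega$-cofinal sequence we can insert one error per block, chosen inside $X$. Making this a real density argument requires strategic closure in which the generic passes through such $\delta$; I expect verifying that the "disagree" move keeps the poset sufficiently closed (preserving inaccessibility) to be the hardest step.

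For the $I_\lambda$ part, coherence modulo finite implies coherence modulo $I_\lambda$, so $\Phi$ induces $\Phi_\lambda$. For nontriviality on $X$ modulo $I_\lambda$, repeat the argument at $\delta$ of cofinality $\lambda$ (or $\operatorname{cf}\ge\lambda$), inserting $\lambda$ many disagreements inside $X$, so that no $g$ induces $\Phi_\lambda$ on $\Pw(X)/I_\lambda$.
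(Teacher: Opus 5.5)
Your overall shape matches the paper: a tower of mod-finite coherent involutions, triviality on bounded sets for free, and nontriviality by choosing a top level after $g$ has been decided. However, you never actually define the forcing, and the part you leave open is where the paper's main idea lies.

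\textbf{The construction is missing.} Your justification for the existence of threads (``every $A\subseteq\kappa$ lies in an intermediate model by closure'') does not hold. In a single ${<}\kappa$-distributive tower poset every bounded piece of the generic is already in $V$. So the new subsets of $\kappa$, which are exactly those whose threads are in doubt, lie in no intermediate model. ``Conditions carry promises'' is therefore only a placeholder.

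In the paper the promises are the coordinates $\gamma<\kappa^+$ of an iteration, one for each name $\nm X_\gamma$ supplied by bookkeeping. This is where $2^\kappa=\kappa^+$, the density of decided conditions and the $\kappa^+$-cc are needed. The order is also deliberately nonstandard: an extension must satisfy $\nm Y^q_\gamma\cap B=f^q_{\Theta_{i+1}}[\nm X_\gamma\cap B]$ \emph{exactly} on each of finitely many invariant blocks $B=[\Theta_i,\Theta_{i+1})$, not merely modulo finite. With mod-finite promises, the finite errors of successive extensions accumulate along an $\omega$-sequence and lower bounds need not exist. The block-exact order is what makes the $\bigstar$-limits of Claim~\ref{claim:fusion} and Lemma~\ref{lem:coarsening} work. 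It is also what makes $\FG$ well defined, a homomorphism and an involution.

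\textbf{The nontriviality step fails as stated.} Once promises exist, it is false that the only constraint on the new top level $\sigma_\delta$ is mod-finite coherence with the lower levels. It must still carry $\nm X_\gamma\cap\delta$ onto $\nm Y_\gamma\cap\delta$ modulo finite (exactly on each block above the activation of $\gamma$), for all ${<}\kappa$ many $\gamma$ in the support. Nothing in your argument prevents some promised $\nm X_\gamma$ from separating the moved points on infinitely many blocks, and then the result is not a condition.

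The missing idea is the surgery of Lemma~\ref{lem:surgery}. Since $|\supp(p_n)|=\lambda_n$, and $p_{n+1}$ decides $\mu_n^+=(2^{\lambda_n})^+$ points of $\nm X\cap I_n$ together with the membership of these points and their $f$-images in every $\nm X_\sigma$ ($\sigma\in\supp(p_n)$), one finds two fixed points, or two disjoint $2$-cycles, of the same pattern and re-pairs them. This gives $f'(i_n)\neq g_0(i_n)$ while $f'[\nm X_\sigma\cap I_n]=f[\nm X_\sigma\cap I_n]$ for every such $\sigma$. Coordinates activated later see only finitely many changes below their activation height.

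The values of $g$ below $\Theta_\omega$ also do not come from a club of reflecting $\delta$. They come from a fusion $\langle p_n\rangle$ whose limit still has a free top level, and the surgered $p^{**}$ is simply another lower bound of the $p_n$.

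\textbf{The $I_\lambda$ step fails for uncountable $\lambda$.} Your plan of inserting $\lambda$ disagreements at one level cannot work. Two top levels coherent with the same $\langle\sigma_\alpha:\alpha<\delta\rangle$ differ on a set that is finite below every $\alpha<\delta$, hence of order type at most $\omega$. So the choice of $\sigma_\delta$ can create only countably many new disagreements. The paper instead collects countably many disagreements at each of $\kappa$ many levels $\alpha\in Y$ and separates them by Lemma~\ref{lemma: separate}. It then concludes using the regularity of $\kappa$ and the fact that $\Phi_\lambda([A]_{I_\lambda})$ is represented by a thread.
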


The main innovation of the technique lies in constructing forcing iterations using a nonstandard approach to taking inverse limits.

%

The organization of the paper is as follows: 

\begin{enumerate}
\item Section \ref{section: preliminary} contains some preliminary facts. 
\item Section \ref{sec:iteration} defines the main forcing recursively for Theorem \ref{theorem: main}.
\item Section \ref{sec:mainlemma} gives the proof of the main properties of the forcing and finishes the proof of Theorem \ref{theorem: main}. A discussion on how our main result answers Questions \ref{question: LM}, \ref{question: LM2}, \ref{question: SS} and \ref{question: V} is also included.

\item Section \ref{section: questions} concludes with some questions and remarks.
\end{enumerate}

\section{Preliminaries}\label{section: preliminary}

We isolate some standard facts used in later sections.

\begin{lemma}\label{lem:agreement}
Let $B$ be infinite and $g,h\colon B\to\kappa$ injections with
$g[H]\eqs h[H]$ for every $H\subseteq B$. Then $g\eqs h$.
\end{lemma}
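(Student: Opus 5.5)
We argue by contraposition: assuming $g\not\eqs h$, we build a set $H\subseteq B$ with $g[H]\not\eqs h[H]$. Here $g\eqs h$ means that $\{\xi\in B: g(\xi)\neq h(\xi)\}$ is finite. So suppose the disagreement set $D=\{\xi\in B: g(\xi)\neq h(\xi)\}$ is infinite.

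The plan is to choose, by recursion, an infinite set $H=\{\xi_n:n<\omega\}\subseteq D$ such that
\[
g[H]\cap h[H]=\emptyset .
\]
Once this holds, $g[H]$ and $h[H]$ are disjoint infinite sets, the infiniteness coming from injectivity of $g$. Their symmetric difference is therefore infinite, so $g[H]\not\eqs h[H]$, contradicting the hypothesis.

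For the recursion, suppose $\xi_0,\dots,\xi_{n-1}$ have been chosen with
\[
\{g(\xi_i):i<n\}\cap\{h(\xi_i):i<n\}=\emptyset .
\]
We need $\xi_n\in D$, distinct from the earlier points, satisfying three conditions:
\begin{itemize}
\item $g(\xi_n)\neq h(\xi_n)$;
\item $g(\xi_n)\notin\{h(\xi_i):i<n\}$;
\item $h(\xi_n)\notin\{g(\xi_i):i<n\}$.
\end{itemize}
The first condition is automatic because $\xi_n\in D$. Since $g$ is injective, at most $n$ points $\xi$ have $g(\xi)\in\{h(\xi_i):i<n\}$. Since $h$ is injective, at most $n$ points $\xi$ have $h(\xi)\in\{g(\xi_i):i<n\}$. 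Excluding these finitely many points, together with $\xi_0,\dots,\xi_{n-1}$, still leaves elements of the infinite set $D$. Pick any of them as $\xi_n$; the disjointness condition is preserved.

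The only point needing care is this finite-exclusion count, which is exactly where injectivity of both maps is used; nothing else is difficult. The infiniteness of $B$ is not needed beyond the infiniteness of $D$.
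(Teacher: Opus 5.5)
Your proof is correct and is essentially the paper's argument. The paper forms the graph on a countable $A\subseteq D$ joining $x\ne y$ when $g(x)=h(y)$ or $g(y)=h(x)$, uses injectivity to bound every degree by $2$, and takes an infinite independent set $H$ — which is exactly what your greedy recursion builds by hand — so that $g[H]\cap h[H]=\emptyset$ with $g[H]$ infinite gives the contradiction.
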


\begin{proof}
Suppose $D:=\{x\in B:g(x)\ne h(x)\}$ is infinite; fix a countably infinite
$A\subseteq D$ and form the graph on $A$ joining $x\ne y$ iff $g(x)=h(y)$ or
$g(y)=h(x)$. Every vertex has degree $\le2$, so $A$ has an infinite
independent set $H$; then $g[H]\cap h[H]=\emptyset$ while $g[H]$ is
infinite, contradicting $g[H]\eqs h[H]$.
\end{proof}

\begin{lemma}\label{lem:localimage}
Let $\langle f_\delta:\delta<\kappa\rangle$ be involutions
$f_\delta\colon\delta\to\delta$ with $f_{\delta}\rest\delta'\eqs f_{\delta'}$
for $\delta'<\delta$. If $\delta'\le\delta$ and $x,y\subseteq\kappa$ satisfy
$y\cap\delta\eqs f_{\delta}[x\cap\delta]$, then
$y\cap\delta'\eqs f_{\delta'}[x\cap\delta']$.
\end{lemma}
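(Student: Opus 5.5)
Intersect both sides of the hypothesis with $\delta'$ and use coherence to trade $f_\delta$ for $f_{\delta'}$ on the part of $x$ that lies below $\delta'$. The one subtlety is that points of $x$ above $\delta'$ might be mapped by $f_\delta$ back below $\delta'$, so we must check this happens only finitely often.

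First, since $\delta'\subseteq\delta$, intersecting the relation $y\cap\delta\eqs f_\delta[x\cap\delta]$ with $\delta'$ preserves $\eqs$, giving
\[
y\cap\delta'\eqs f_\delta[x\cap\delta]\cap\delta'.
\]
Split $x\cap\delta=(x\cap\delta')\cup(x\cap[\delta',\delta))$, so that
\[
f_\delta[x\cap\delta]\cap\delta' = \bigl(f_\delta[x\cap\delta']\cap\delta'\bigr)\cup\bigl(f_\delta[x\cap[\delta',\delta)]\cap\delta'\bigr).
\]
If $\delta'=\delta$ there is nothing to prove, so assume $\delta'<\delta$.

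Second, we use the coherence hypothesis $f_\delta\rest\delta'\eqs f_{\delta'}$. Let $E=\{\alpha<\delta':f_\delta(\alpha)\neq f_{\delta'}(\alpha)\}$; it is finite. For $\alpha\in x\cap\delta'\setminus E$ we have $f_\delta(\alpha)=f_{\delta'}(\alpha)<\delta'$. Hence $f_\delta[x\cap\delta']$ agrees with $f_{\delta'}[x\cap\delta']$ up to the finite set of images of points of $E$, and in particular $f_\delta[x\cap\delta']\cap\delta'\eqs f_{\delta'}[x\cap\delta']$.

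Third, the step I expect to be the main obstacle is showing that $f_\delta[[\delta',\delta)]\cap\delta'$ is finite. Since $f_\delta$ is an involution, $\beta=f_\delta(\alpha)$ with $\alpha\ge\delta'$ and $\beta<\delta'$ forces $f_\delta(\beta)=\alpha\ge\delta'$. But $f_{\delta'}(\beta)<\delta'$, so $\beta\in E$. Thus this set is contained in $E$ and is finite.

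Combining the three steps gives $y\cap\delta'\eqs f_{\delta'}[x\cap\delta']$, as required.
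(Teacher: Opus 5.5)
Your proof is correct and follows essentially the same route as the paper: intersect with $\delta'$, use coherence to replace $f_\delta$ by $f_{\delta'}$ on $x\cap\delta'$ up to finitely many points, and use the involution property to show that only finitely many points of $[\delta',\delta)$ are sent below $\delta'$. Your observation that any such image $\beta$ must lie in the finite disagreement set $E$ (since $f_\delta(\beta)\ge\delta'>f_{\delta'}(\beta)$) spells out a step that the paper's proof only sketches.
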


\begin{proof}
$y\cap\delta'=(y\cap\delta)\cap\delta'
\eqs f_{\delta}[x\cap\delta]\cap\delta'$. Now
$f_{\delta}[x\cap\delta]\cap\delta'$ differs from
$f_{\delta'}[x\cap\delta']$ by a finite set: the points of $x\cap\delta'$
where $f_\delta$ and $f_{\delta'}$ differ are finitely many; and the points
of $x\cap[\delta',\delta)$ mapped below $\delta'$ by $f_\delta$ are finitely
many, since $f_\delta$ is an involution and
$f_\delta\rest\delta'\eqs f_{\delta'}$.
\end{proof}

\begin{lemma}\label{lemma: separate}
Let $g$ and $h$ be functions sharing the same domain $H$ such that $g\ne^*h$. Then there exists an infinite subset $B\subseteq H$ such that $g'' B \cap h'' B =\emptyset$. 
\end{lemma}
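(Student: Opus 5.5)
The plan is to mimic the graph argument of Lemma~\ref{lem:agreement}, but here $g$ and $h$ need not be injective. So first I pass to a set on which the two functions are well behaved, and only then extract an independent set. Let $D=\{x\in H: g(x)\neq h(x)\}$. The hypothesis $g\neq^* h$ says that $D$ is infinite. Fix a countably infinite $A\subseteq D$. It suffices to find an infinite $B\subseteq A$ such that $g(x)\neq h(y)$ for all $x,y\in B$. For $x=y$ this holds automatically because $x\in D$.

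\textbf{Step 1: reduce to simple cases of $g$ and $h$.} Apply Ramsey's theorem to the colouring of pairs $x<y$ in $A$ (ordered as $\omega$) according to whether $g(x)=g(y)$, and separately whether $h(x)=h(y)$. This gives an infinite $A_1\subseteq A$ on which each of $g$ and $h$ is either constant or injective.

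\textbf{Step 2: case analysis on $A_1$.}
\begin{enumerate}
\item If $g$ and $h$ are both constant on $A_1$, with values $a$ and $b$, then $a\neq b$, since any point of $A_1\subseteq D$ witnesses $g\neq h$. Take $B=A_1$.
\item If $g$ is constant with value $a$ and $h$ is injective, then $h(y)=a$ for at most one $y$. Removing that point gives $B$. The symmetric case is handled the same way.
\item If $g$ and $h$ are both injective on $A_1$, form the graph of Lemma~\ref{lem:agreement}: join $x\neq y$ iff $g(x)=h(y)$ or $g(y)=h(x)$. Injectivity makes every vertex have degree at most $2$. A graph on a countably infinite set with bounded degree has an infinite independent set; one can get it greedily, or from Ramsey's theorem, since there is no infinite clique. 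Any such independent set $B$ satisfies $g[B]\cap h[B]=\emptyset$, because the diagonal pairs are excluded by $B\subseteq D$.
\end{enumerate}

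I expect no step to be a real obstacle. The only point that needs care is noticing that without injectivity the degree bound in Lemma~\ref{lem:agreement} fails. For instance, $h$ could be constant equal to $g(x_0)$ on many points. The Ramsey reduction in Step 1 is exactly what handles this.
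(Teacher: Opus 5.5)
Your argument is correct, but it takes a different route from the paper's.

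\textbf{The paper's route.} The paper avoids Ramsey's theorem by a fiber dichotomy on $D$.
\begin{enumerate}
\item If some fiber $g^{-1}(v)\cap D$ (or $h^{-1}(v)\cap D$) is infinite, that fiber is already the required $B$. This is because $h(x)\neq g(x)=v$ for every $x\in D$.
\item Otherwise $g$ and $h$ are finite-to-one on $D$. A direct recursion then picks $x_k$ outside the finitely many points $x$ with $g(x)\in h[\{x_i:i<k\}]$ or $h(x)\in g[\{x_i:i<k\}]$.
\end{enumerate}
In graph terms, the paper only needs your graph to be \emph{locally finite}, and finite-to-one already gives that. The second case is just the greedy independent-set construction in a locally finite graph.

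\textbf{Your route.} You invoke Ramsey's theorem to upgrade to ``constant or injective.'' This lets you reuse the degree-$\le 2$ bound of Lemma~\ref{lem:agreement} verbatim. Your approach is more modular and makes the link with Lemma~\ref{lem:agreement} explicit. The paper's approach is more elementary and identifies exactly what is needed: an infinite fiber inside $D$, or finite-to-one behaviour.

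\textbf{A small point in your case 2.} No point actually needs to be removed there. Since $A_1\subseteq D$, we have $h(y)\neq g(y)=a$ for every $y\in A_1$. This is the same observation the paper uses for its infinite-fiber case.
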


\begin{proof}
Let $D=\{x\in H:  g(x)\neq h(x)\}$. By the hypothesis, $D$ is infinite.
If some value $v$ has $g^{-1}(v)\cap D$ infinite, take
$B:=g^{-1}(v)\cap D$: then $g[B]=\{v\}$ while $v\notin h[B]$, since
$h(x)\ne g(x)=v$ for every $x\in B$. Symmetrically if some $h$-fiber
meets $D$ infinitely, the same argument applies. Otherwise $g$ and $h$ are finite-to-one on $D$;
recursively for each $k\in \omega$ choose $x_k\in D$ outside the finite set
\[
\{x_i:i<k\}\ \cup\ \{x\in D: g(x)\in h[\{x_i:i<k\}]\}\ \cup\
\{x\in D: h(x)\in g[\{x_i:i<k\}]\},
\]
and let $B:=\{x_k:k<\omega\}$: for $i>j$, $g(x_i)\ne h(x_j)$ and
$h(x_i)\ne g(x_j)$ by the choice of $x_i$, and $g(x_i)\ne h(x_i)$ since
$x_i\in D$.
\end{proof}

We refer the reader to the standard textbooks \cite{Kunen,Jech} for notation and basic facts concerning forcing.

\section{The main forcing}\label{sec:iteration}
In this section, we work towards the proof of Theorem \ref{theorem: main} by defining the forcing.
Assume from now on: $\kappa$ is inaccessible and $2^{\kappa}=\kappa^{+}$. We will define recursively a sequence $\langle\mathbb P_\alpha:\alpha\le
\kappa^{+}\rangle$ together with, for each $\gamma\ge 1$, a nice
$\mathbb P_\gamma$-name $\nm X_\gamma$ for a subset of $\kappa$, so that the
following hold.

\begin{enumerate}[label=\textup{(P\arabic*)},start=0]
\item $\mathbb P_\alpha\lessdot\mathbb P_\beta$ whenever
$\alpha<\beta\le\kappa^{+}$; namely, $\mathbb P_\alpha$ is a complete suborder of $\mathbb P_\beta$.

\item $\mathbb P_\alpha$ consists of $p=\langle p_\gamma:\gamma<\alpha\rangle$
where:
\begin{itemize}
\item $p_0=\langle f^{p}_\delta:\delta\le\Theta\rangle$ for some
$\Theta<\kappa$, denoted $\hgt(p)$; each $f^{p}_\delta\colon\delta\to\delta$
is an \emph{involution}, namely, $f^{p}_\delta\circ f^{p}_\delta =\mathrm{id}_{\delta}$; and $f^{p}_\delta\rest\delta'\eqs f^{p}_{\delta'}$
for $\delta'<\delta\le\Theta$;
\item for $1\le\gamma<\alpha$, $p\restriction \gamma\in \mathbb{P}_\gamma$ and
\begin{equation*}\tag{P1(a)}\label{eq:P1a}
p\rest\gamma\ \force_{\mathbb P_\gamma}\
\Bigl[\,p_\gamma=\emptyset\ \ \vee\ \
\Bigl(p_\gamma= \nm Y^{p}_\gamma \subseteq \Theta\ \wedge\
f^{p}_{\Theta}\bigl[\nm X_\gamma\cap\Theta\bigr]\eqs \nm Y^{p}_\gamma
\Bigr)\,\Bigr],
\end{equation*}
where $\nm Y^{p}_\gamma$ is a nice $\mathbb P_\gamma$-name in the sense of \cite[Chapter IV]{Kunen};
\item $\supp(p):=\{\gamma\ge1: p_\gamma\neq\emptyset\}$ has size $<\kappa$.
\end{itemize}

\item For $p,q\in\mathbb P_\alpha$, $q\le_{\mathbb P_\alpha}p$ if and only
if $\supp(q)\supseteq\supp(p)$ and, upon writing
$\Theta=\hgt(p)$ and $\Theta'=\hgt(q)$, $q_0$ is an
\emph{end-extension} of $p_0$, i.e.
\[
f^{p}=\langle f^{p}_\delta:\delta\le\hgt(p)\rangle\ \subseteq\
f^{q}=\langle f^{q}_\delta:\delta\le\hgt(q)\rangle ,
\]
and there are $n\in\omega$ and
$\Theta_0=\Theta<\Theta_1<\dots<\Theta_n=\Theta'$ (where $n=0$, i.e.\
$\Theta_0=\Theta=\Theta'$, is allowed)
such that:
\begin{enumerate}[label=(\alph*)]
\item $f^{q}_{\Theta_{i+1}}\bigl[[\Theta_i,\Theta_{i+1})\bigr]
=[\Theta_i,\Theta_{i+1})$ for every $i<n$;
\item if $\gamma\in\supp(p)$, then
$q\rest\gamma\force\ (\forall i<n)\
\nm Y^{q}_\gamma\cap[\Theta_i,\Theta_{i+1})
= f^{q}_{\Theta_{i+1}}\bigl[\nm X_\gamma\cap[\Theta_i,\Theta_{i+1})\bigr]$;
\item if $\gamma\in\supp(p)$, then
$q\rest\gamma\force\ \nm Y^{p}_\gamma=\nm Y^{q}_\gamma\cap\Theta_0$.
\end{enumerate}
\end{enumerate}

We call $\Theta_0<\dots<\Theta_n$ a \emph{witness} to $q \le_{\mathbb P_\alpha} p$, and the
intervals $[\Theta_i,\Theta_{i+1})$ its \emph{blocks}. Note that by (a) the
level $f^{q}_{\Theta_{i+1}}$ restricts to an involution \emph{of} the block
$[\Theta_i,\Theta_{i+1})$. In particular, no point enters or leaves a block.


\begin{observation}\label{obs:basic}
The following facts are immediate from \textup{(P0)--(P2)}:
\begin{enumerate}[label=(\roman*)]
\item $\le_{\mathbb P_\alpha}$ is transitive.
\item For $p,q\in\mathbb P_\alpha$ and $\alpha<\beta$:
$p\le_{\mathbb P_\alpha}q$ iff $p\le_{\mathbb P_\beta}q$. Hence $\mathbb P_\alpha$ is a suborder of $\mathbb P_\beta$ (this uses (P2) only).
\item For $p,q\in\mathbb P_\beta$: $p\le_{\mathbb P_\beta}q$ implies
$p\rest\alpha\le_{\mathbb P_\alpha}q\rest\alpha$ for all $\alpha<\beta$
(the same witness works), but \emph{not} conversely.
\end{enumerate}
\end{observation}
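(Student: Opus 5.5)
The plan is to verify the three items directly from (P1)--(P2), proving (iii) first, then (ii), and (i) last, by induction on $\alpha$. Item (i) needs the forcing relation over each $\mathbb P_\gamma$ with $\gamma<\alpha$ to be monotone, which requires that $\le_{\mathbb P_\gamma}$ already be a preorder.

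For (iii), let $p\le_{\mathbb P_\beta}q$ with witness $\Theta_0<\dots<\Theta_n$, and fix $\alpha<\beta$. Then $\supp(p\rest\alpha)=\supp(p)\cap\alpha\supseteq\supp(q)\cap\alpha=\supp(q\rest\alpha)$. The $0$-th coordinates are unchanged, so the end-extension requirement and clause (a) carry over verbatim. Clauses (b) and (c) for $\gamma\in\supp(q\rest\alpha)$ are exactly the same statements $p\rest\gamma\force\cdots$ over $\mathbb P_\gamma$ as before, since $(p\rest\alpha)\rest\gamma=p\rest\gamma$. So the same witness works.

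The converse fails because $p\rest\alpha\le q\rest\alpha$ imposes nothing on the coordinates in $[\alpha,\beta)$. For example, $p$ may omit from its support some $\gamma\ge\alpha$ that lies in $\supp(q)$. Noting this suffices.

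For (ii), identify $p\in\mathbb P_\alpha$ with its extension by empty coordinates on $[\alpha,\beta)$. This extension is in $\mathbb P_\beta$: its support is unchanged, and (P1(a)) holds trivially at the new coordinates. For $p,q\in\mathbb P_\alpha$, every clause of (P2) computed in $\mathbb P_\beta$ refers only to supports, which lie below $\alpha$, to $0$-th coordinates, and to forcing statements over $\mathbb P_\gamma$ with $\gamma\in\supp(q)\subseteq\alpha$. These are literally the clauses computed in $\mathbb P_\alpha$, so $p\le_{\mathbb P_\alpha}q$ iff $p\le_{\mathbb P_\beta}q$.

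For (i), suppose $r\le q\le p$ in $\mathbb P_\alpha$, with witnesses $\Theta_0<\dots<\Theta_n$ for $q\le p$ and $\Theta'_0=\Theta_n<\dots<\Theta'_m$ for $r\le q$. I will show that the concatenation $\Theta_0<\dots<\Theta_n<\Theta'_1<\dots<\Theta'_m$ witnesses $r\le p$.
\begin{itemize}
\item Support inclusion and end-extension are transitive.
\item Since $f^r$ end-extends $f^q$, we have $f^r_{\Theta_{i+1}}=f^q_{\Theta_{i+1}}$ for all $i<n$. Hence (a) holds on the old blocks, and it holds on the new blocks directly.
\item For (b) and (c), fix $\gamma\in\supp(p)\subseteq\supp(q)$. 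By (iii) and the induction hypothesis, $r\rest\gamma\le_{\mathbb P_\gamma}q\rest\gamma$, so $r\rest\gamma$ forces everything $q\rest\gamma$ forces.
\item On the new blocks, (b) is given by $r\le q$. On an old block $[\Theta_i,\Theta_{i+1})$, $r\rest\gamma$ forces $\nm Y^r_\gamma\cap\Theta_n=\nm Y^q_\gamma$ by (c) for $r\le q$. It also forces $\nm Y^q_\gamma\cap[\Theta_i,\Theta_{i+1})=f^q_{\Theta_{i+1}}[\nm X_\gamma\cap[\Theta_i,\Theta_{i+1})]$, and $f^q_{\Theta_{i+1}}=f^r_{\Theta_{i+1}}$. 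Together these give (b).
\item Similarly, $r\rest\gamma$ forces $\nm Y^p_\gamma=\nm Y^q_\gamma\cap\Theta_0=\nm Y^r_\gamma\cap\Theta_n\cap\Theta_0=\nm Y^r_\gamma\cap\Theta_0$, which gives (c).
\end{itemize}

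The only delicate point is this bookkeeping: the restriction $r\rest\gamma\le q\rest\gamma$ has to be available at every $\gamma<\alpha$, which is why (iii) and the inductive transitivity are established first.
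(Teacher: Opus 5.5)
Your proof is correct, and it is the routine verification the paper leaves as ``immediate'': the same witness for restrictions in (iii), padding by empty coordinates in (ii), and concatenation of witnesses for transitivity in (i), which is what the paper's Remark on finite witnesses alludes to. Your support-based counterexample to the converse is legitimate (take $\beta=\gamma+1$ if the converse is read as quantifying over all $\alpha<\beta$), though the more substantive failure is at limit $\beta$: there supports cannot be the culprit, and instead the witnesses for the various $p\rest\alpha\le q\rest\alpha$ need not combine into a single finite witness.
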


\begin{remark}\label{rem:internalpreserved}
Clause (P2)(b) is consistent with the mod-finite clause \eqref{eq:P1a} for
$q$: if $q\le p$ has witness $\Theta_0<\dots<\Theta_n$, then, block by
block,
$f^{q}_{\Theta'}[\nm X_\gamma\cap[\Theta_i,\Theta_{i+1})]$ differs from
$f^{q}_{\Theta_{i+1}}[\nm X_\gamma\cap[\Theta_i,\Theta_{i+1})]$ by a finite
set (the two levels agree mod finite), and below $\Theta_0$ the images under
$f^{q}_{\Theta'}$ and $f^{p}_{\Theta_0}$ likewise differ finitely; since
$n$ is finite, $f^{q}_{\Theta'}[\nm X_\gamma\cap\Theta']\eqs
\nm Y^{q}_\gamma$ is forced. Thus finitely many blocks per extension step
cost only finitely much mod-finite error --- the reason (P2) insists on
\emph{finite} witnesses. On the other hand, requiring every nontrivial
extension to have a single block would destroy transitivity.
\end{remark}

\subsection{The recursive construction}\label{sec:recursion}

\subsubsection{Successor step}\label{subsec:successor}
Suppose $\langle\mathbb P_\alpha:\alpha\le\beta\rangle$ is defined and
satisfies (P0), (P1), (P2). Fix a nice $\mathbb P_\beta$-name $\nm X_\beta$ for a
subset of $\kappa$, and define $\mathbb P_{\beta+1}$: $p=\langle p_\rho:
\rho<\beta+1\rangle\in\mathbb P_{\beta+1}$ iff $p\rest\beta\in\mathbb
P_\beta$ and
\[
p\rest\beta\ \force_{\mathbb P_\beta}\
p_\beta=\emptyset\ \vee\
\Bigl(p_\beta=\nm Y^{p}_\beta\ \wedge\
f^{p}_{\hgt(p)}\bigl[\nm X_\beta\cap\hgt(p)\bigr]\eqs\nm Y^{p}_\beta\Bigr),
\]
with the order given by (P2).

Clearly $\mathbb P_\beta\subseteq\mathbb P_{\beta+1}$ as a suborder
(identify $p\in\mathbb P_\beta$ with $p{}^\frown \emptyset$; a suborder
since we have (P2)). For
(P0), by Observation \ref{obs:basic} (iii), we know that any two incompatible conditions in $\mathbb{P}_\beta$ are incompatible in $\mathbb{P}_{\beta+1}$. It remains to see that maximal antichains of $\mathbb P_\beta$ stay
predense. Let $A\subseteq\mathbb P_\beta$ be a maximal antichain,
$p\in\mathbb P_{\beta+1}$, and pick $a\in A$ and
$q\le_{\mathbb P_\beta}a,\,p\rest\beta$. Let
$\Theta_0<\Theta_1<\dots<\Theta_m$ witness $q\le p\rest\beta$. Extend $q$ to
$q'\in\mathbb P_{\beta+1}$ by defining the $\beta$-th coordinate
\[
q'_\beta:=\nm Y^{p}_\beta\cup
\bigcup_{i<m}f^{q}_{\Theta_{i+1}}\bigl[\nm X_\beta\cap[\Theta_i,
\Theta_{i+1})\bigr]
\]
(as a $\mathbb P_\beta$-name). Then $q'\in\mathbb P_{\beta+1}$ --- the
internal clause (P1) for $q'_\beta$ is forced by
Remark~\ref{rem:internalpreserved} --- and $\Theta_0,\dots,\Theta_m$ witnesses
$q'\le_{\mathbb P_{\beta+1}}p$, while $q'\le q\le a$. Hence $A$ remains
predense, and therefore $\mathbb P_\beta\lessdot\mathbb P_{\beta+1}$.

\subsubsection{Limit step}\label{subsec:limit}
Suppose $\langle\mathbb P_\alpha:\alpha<\beta\rangle$ is defined and
satisfies (P0), (P1), (P2). Let the elements of $\mathbb P_\beta$ be the
${<}\kappa$-support limit of $\langle \mathbb{P}_{\alpha}: \alpha<\beta\rangle$, and the ordering of $\mathbb{P}_\beta$ be as described in (P2). Let us stress that it is the order of $\mathbb{P}_\beta$ that makes it not merely a standard $<\kappa$-support iteration.
 Clearly each $\mathbb P_\alpha\subseteq\mathbb P_\beta$ is a
suborder and incompatibility relations are preserved. It remains to check the predensity of maximal antichains, i.e.\
$\mathbb P_\alpha\lessdot\mathbb P_\beta$ for $\alpha<\beta$.

Let $A\subseteq\mathbb P_\alpha$ be a maximal antichain and
$p\in\mathbb P_\beta\setminus\mathbb P_\alpha$. Pick $a\in A$ and
$q\le_{\mathbb P_\alpha}a,\,p\rest\alpha$, and let
$\Theta_0=\hgt(p)=\hgt(p\rest\alpha)<\Theta_1<\dots<\Theta_m=\hgt(q)$
witness $q\le_{\mathbb P_\alpha}p\rest\alpha$. If $\Theta_0=\hgt(q)$,
i.e.\ $m=0$, then
$q{}^\frown\langle p_\rho:\rho\in[\alpha,\beta)\rangle\le_{\mathbb
P_\beta}q,\,p$ by the very definition of $\mathbb P_\beta$, and we are
done. We extend $q$ to
$q'\le_{\mathbb P_\beta}p$ with $q'\rest\alpha=q$, coordinate by coordinate
along $\supp(p)\setminus\alpha=\{\xi_i:i<\otp(\supp(p)\setminus\alpha)\}$
(enumerated increasingly): for each such $\xi=\xi_i$ put
\begin{equation}\label{eq:tailext}
q'_\xi:= \nm Y^{p}_\xi\cup
\bigcup_{j<m}f^{q}_{\Theta_{j+1}}\bigl[\nm X_\xi\cap[\Theta_j,
\Theta_{j+1})\bigr],
\end{equation}
and $q'_\rho:=\emptyset$ for the remaining $\rho\in\beta\setminus
(\alpha\cup\supp(p))$. (Pedantically, $q'_\xi$ is a $\mathbb
P_\xi$-name such that $1_{\mathbb P_\xi}$ forces
\eqref{eq:tailext}.)

We prove simultaneously by induction on $i$ that
$q'\rest(\xi_i+1)\in\mathbb P_{\xi_i+1}$ and
$q'\rest(\xi_i+1)\le p\rest(\xi_i+1)$, with the \emph{same} witness
$\Theta_0,\dots,\Theta_m$ throughout. At the $i$-th step the induction
hypothesis first gives $q'\rest\xi_i\in\mathbb P_{\xi_i}$ and
$q'\rest\xi_i\le p\rest\xi_i$. Consequently:
\begin{itemize}
\item $q'\rest\xi_i\force$ ``$f^{q}_{\hgt(q)}[\nm X_{\xi_i}\cap\hgt(q)]
\eqs\nm Y^{q'}_{\xi_i}$'': below $\Theta_0$ this is the internal clause of
$p$ at $\xi_i$ together with $f^{q}_{\Theta_m}\rest\Theta_0\eqs
f^{p}_{\Theta_0}$ (finitely much error); on each block it is exact by
definition of $q'_{\xi_i}$ (cf.\ Remark~\ref{rem:internalpreserved}). So
$q'\rest(\xi_i+1)$ satisfies (P1). Note $q'\rest\xi_i\le p\rest\xi_i$
(inductive hypothesis together with $q\le p\rest\alpha$) forces the
statements about $\nm Y^p_{\xi_i}$ that (P1) for $p$ provides.
\item (P2)(a) for the witness is a statement about $f^{q}$ only, which holds
because it held for $q\le p\rest\alpha$.
\item (P2)(b),(c) at $\xi_i$ hold by the very definition of
$q'_{\xi_i}$.
\end{itemize}
At limit stages of the enumeration the same simultaneous induction passes
through unions of the coordinates. Thus $q'\in\mathbb P_\beta$ and
$q'\le_{\mathbb P_\beta}p$; moreover $q'\le q\le a$. Hence
$\mathbb P_\alpha\lessdot\mathbb P_\beta$, completing (P0).

\section{Main properties of the forcing}\label{sec:mainlemma}

\begin{lemma}\label{lem:mainlemma}
Let $\beta\le\kappa^{+}$. 
\begin{enumerate}[label=\textup{\arabic*)},start=0]
\item For $\alpha<\beta\leq \kappa^{+}$ and $\Theta'<\kappa$, the set
$D_{\alpha,\Theta'}=\{p\in\mathbb P_{\beta}:\alpha\in\supp(p),\
\hgt(p)\ge\Theta'\}$ is dense open.
\item $\mathbb P_\beta$ is $\kappa$-strategically closed.
\item $\mathbb P^{0}_\beta:=\{p\in\mathbb P_\beta:\forall\gamma<\beta\
(p_\gamma=\emptyset$ or $p\rest\gamma$ decides $p_\gamma$, i.e.\
$p\rest\gamma\force p_\gamma=\check A$ for some
$A\in[\kappa]^{<\kappa})\}$ is dense in
$\mathbb P_\beta$.

\item $\mathbb P_{\kappa^{+}}$ is $\kappa^{+}$-cc.
\end{enumerate}

Let $G\subseteq\mathbb P_{\kappa^{+}}$ be
generic over $V$. Write $f^{G}_\delta$ for the generic tower
$\bigcup\{f^{p}_\delta:p\in G,\ \hgt(p)\ge\delta\}$.
\begin{enumerate}[label=\textup{\arabic*)},start=4]
\item $G$ defines an automorphism $\FG$ of $\Pw(\kappa)/\Fin$ which is
almost trivial: well defined, a bijection, preserving $\wedge$, $\vee$ and
complements, with $\FG\circ\FG=\mathrm{id}$, and $\FG\rest\Pw(\beta)/\Fin$
induced by $f^{G}_\beta$ for every $\beta<\kappa$.
 \item In $V[G]$, suppose $E\colon X\to\kappa$ for some
 $X\in[\kappa]^{\kappa}$. Then
$\exists\delta<\kappa\
E\rest(X\cap \delta)\not\eqs f^{G}_\delta\rest( X\cap\delta)$.
\end{enumerate}

\end{lemma}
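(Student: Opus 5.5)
I will prove the six items in the order stated; (1) and (2) feed into (3), and (4)--(5) use all of (0)--(3).

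\emph{Items 0)--2).} For 0), given $p$ with $\alpha\notin\supp(p)$ or $\hgt(p)<\Theta'$, first raise the height. Put $\Theta_1:=\max(\Theta',\hgt(p)+\omega)$ and extend the tower by letting $f_{\Theta_1}$ be $f^p_{\hgt(p)}$ on $\hgt(p)$ together with the identity on $[\hgt(p),\Theta_1)$. The intermediate levels $f_\delta$, for $\hgt(p)<\delta<\Theta_1$, are the restrictions of $f_{\Theta_1}$; these are involutions because the extension is the identity on the new block. This uses the single block $[\hgt(p),\Theta_1)$. For every $\gamma\in\supp(p)$ let $Y^q_\gamma:=Y^p_\gamma\cup(\nm X_\gamma\cap[\hgt(p),\Theta_1))$, exactly as in \eqref{eq:tailext}. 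To put $\alpha$ into the support, set $Y^q_\alpha:=f^q_{\Theta_1}[\nm X_\alpha\cap\Theta_1]$. Then (P2)(b),(c) impose no condition at $\alpha$, since $\alpha\notin\supp(p)$. Openness is clear because supports only grow and heights never decrease.

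For 1), I take Player~II's strategy in a game of length $\kappa$. At a successor move she plays a condition extending the current one. At a limit stage $\lambda<\kappa$ she plays the union: heights $\Theta^*:=\sup_i\Theta^{(i)}$, levels $f_\delta$ for $\delta<\Theta^*$ inherited, and $Y_\gamma:=\bigcup_i Y^{(i)}_\gamma$. The delicate point is the new top level $f_{\Theta^*}$, which must restrict mod finite correctly to every earlier level. I would have II always extend using witnesses whose blocks are \emph{closed under the involutions}, and define $f_{\Theta^*}$ as the union of $f^{(i)}_{\Theta^{(i)}}$ restricted to consecutive block-differences $[\Theta^{(i)},\Theta^{(i+1)})$. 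By (P2)(a) this union is an involution. It agrees mod finite with each $f_{\Theta^{(i)}}$ on $\Theta^{(i)}$, since only finitely many blocks separate any two of II's moves. Likewise $\nm Y_\gamma$ is forced to equal $f_{\Theta^*}[\nm X_\gamma\cap\Theta^*]$ mod finite, because the errors are finite on each segment and the segments are exact.

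To show the limit is a lower bound of each earlier move, use the witness $\Theta^{(i)}<\Theta^*$ with the single block $[\Theta^{(i)},\Theta^*)$. Clause (b) on that block requires exactness. This is why II's moves must be exact on all blocks, which the \eqref{eq:tailext}-style extension guarantees when II also freezes the values of the top levels. I expect this exactness bookkeeping, together with the supports being unions of size ${<}\kappa$, to be the main obstacle in 1).

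For 2), I run the usual bootstrapping argument. Strategic closure lets us decide the name $\nm Y_\gamma\cap\Theta$ coordinate by coordinate along a play of length ${<}\kappa$ (bookkeeping a cofinal enumeration of support elements), taking limits via 1). Each $\nm Y_\gamma$ is a nice name for a subset of $\Theta<\kappa$, so fewer than $\kappa$ decisions are needed per coordinate.

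\emph{Item 3): the chain condition.} I would use a $\Delta$-system argument on the dense set $\mathbb P^0$, with $\kappa^{<\kappa}=\kappa$. Given $\kappa^+$ conditions, thin out so that the supports form a $\Delta$-system with root $r$ and all heights equal $\Theta$. Since there are at most $\kappa$ possible towers $f^p$ and at most $\kappa$ possible decided values $Y_\gamma$ for $\gamma\in r$, we may also assume these agree. Two such conditions $p,p'$ then have a common extension: take $f^p$, and the union of the coordinates. Here (P1) is forced inductively because on the common coordinates the values coincide, and the witness has $n=0$.

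\emph{Items 4) and 5).} For 4), define $\FG([x])=[\bigcup_\delta\ldots]$, namely $\FG([x])=[y]$ where $y\cap\beta\eqs f^G_\beta[x\cap\beta]$ for all $\beta$. By density (0), every $x\in V[G]$ is some $\nm X_\gamma$ under bookkeeping with $2^\kappa=\kappa^+$, using the $\kappa^+$-cc and the absence of new ${<}\kappa$ sequences from 1). Then $y=\bigcup_{p\in G}Y^p_\gamma$ works. Coherence of the levels follows from Lemma~\ref{lem:localimage}, and uniqueness mod finite follows from agreement on each $\beta$ together with the fact that the blocks force local exactness. The Boolean properties and $\FG\circ\FG=\mathrm{id}$ follow from the $f_\delta$ being involutions.

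For 5), suppose $E$ is given with $E\rest(X\cap\delta)\eqs f^G_\delta$ for all $\delta$. By density, below a condition forcing this, extend so that the next block $[\Theta,\Theta_1)$ contains an infinite subset of $X$ (this is possible since $|X|=\kappa$) on which $f_{\Theta_1}$ is chosen to avoid the value $E$ already decided there. Such a value can be decided by strategic closure, and Lemma~\ref{lemma: separate}-style disagreement on infinitely many points then gives a contradiction.
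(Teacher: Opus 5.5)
Your items 0) and 3) match the paper. Your route to 4) via threads is fine, and shorter than the paper's block-by-block argument, provided you state why two threads of $\langle f^G_\beta[x\cap\beta]:\beta<\kappa\rangle$ agree mod finite: a set meeting every $\beta<\kappa$ in a finite set is finite, since $\cf(\kappa)>\omega$. The genuine gaps are in 1) and in 5).

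\emph{The limit in 1).} The limit you describe is in general not a condition. Suppose $p^{(i+1)}\le p^{(i)}$ has a witness with several blocks $[\Theta_j,\Theta_{j+1})$. Then (P2)(b) makes $\nm Y^{(i+1)}_\gamma$ on each block the image under the \emph{block level} $f^{(i+1)}_{\Theta_{j+1}}$. This can differ at finitely many points from the image under the top level $f^{(i+1)}_{\Theta^{(i+1)}}$. Gluing top levels along infinitely many segments can therefore accumulate an infinite error. Then already \eqref{eq:P1a} fails at the limit, and so does the exactness on $[\Theta^{(i)},\Theta^*)$ that (P2)(b) demands for a single-pair witness. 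Neither of your justifications holds. ``Only finitely many blocks separate any two of II's moves'' is false once the play has passed a limit stage. ``Errors finite on each segment'' do not sum to a finite error.

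You do say II must ``freeze the top levels'', but you give no mechanism, and she cannot alter the top level of I's move $q$, since towers end-extend. The missing idea is the Coarsening Lemma~\ref{lem:coarsening}. Given $q\le p$ with witness $\Theta_0<\dots<\Theta_n$, pass to a new height $\Theta''>\hgt(q)$. Make its level the diagonal $f^p_{\Theta_0}\cup\bigcup_{j<n}f^q_{\Theta_{j+1}}\rest[\Theta_j,\Theta_{j+1})$, extended by the identity. The resulting $r$ extends $q$, extends $p$ via the single pair $\Theta_0<\Theta''$, and its top level restricts \emph{pointwise} to $f^p_{\Theta_0}$. Playing this way yields the invariant $(\bigstar)$ of Claim~\ref{claim:fusion}. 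Under $(\bigstar)$ the union of the top levels is exact on every segment, and earlier limit tops are literally initial segments of later ones. (Alternatively, glue the diagonals of the witnesses rather than the top levels.)

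Your item 2) takes its limits ``via 1)'' and so inherits this gap. It also has to handle the fact that each deciding extension raises the height and adds new undecided pieces to every $\nm Y_\gamma$. That is why the paper runs an $\omega$-fusion inside an induction on $\beta$ and then applies Observation~\ref{obs:decided-limit}.

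\emph{Item 5).} Your plan is circular. To learn the values of $\nm E$ on points of a new block you must extend. Any extension that decides them has already fixed the tower on that block, and $\nm E$ may even be defined from the generic tower. So you cannot afterwards choose $f_{\Theta_1}$ there to avoid those values, and appealing to strategic closure does not break the circle.

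The only level still free after $\nm E$ has been decided below it is a new top level at the limit $\Theta_\omega$ of an $\omega$-fusion. Along that fusion, $\nm E\rest(\nm X\cap\Theta_\omega)$ is decided. Because this level must be $\eqs$ to every lower level, it can deviate from the frozen diagonal only finitely often below each $\Theta_n$. Hence you need a deviation in each block $I_n=[\Theta_n,\Theta_{n+1})$. Each deviation must also leave $f[\nm X_\sigma\cap I_n]$ unchanged for every $\sigma\in\supp(p_n)$; otherwise (P2)(b) for the witness $\Theta_n<\Theta_\omega$ fails, and with it $p^{**}\le p_n$.

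The paper's surgery does exactly this:
\begin{enumerate}
\item In $I_n$, decide $\mu_n^+$ points of $X$, together with the membership of each point and of its $f$-partner in every $\nm X_\sigma$.
\item Pigeonhole onto a set of constant pattern; there are at most $4^{\lambda_n}$ patterns.
\item Swap two fixed points, or two $f$-pairs, of that pattern. The images are then preserved (Lemma~\ref{lem:surgery}) while $f'(i_n)\neq E(i_n)$.
\item Replace the top level of the $\bigstar$-limit by the resulting involution.
\end{enumerate}
None of this appears in your sketch, and Lemma~\ref{lemma: separate} does not substitute for it.
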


The proof occupies \S\S\ref{sec:density}--\ref{sec:nontrivial}.

\subsection{Density of heights and supports: item 0)}\label{sec:density}

\begin{proof}[Proof of \textup{0)}]
Let $\beta\leq \kappa^+$, $p\in\mathbb P_{\beta}$, $\Theta=\hgt(p)$, and let
$\Theta'>\Theta$, $\alpha<\beta$ be given. Define $q\le_{\mathbb{P}_\beta} p$ as follows:
\[
f^{q}_\delta:=f^{p}_{\Theta}\cup\mathrm{id}_{[\Theta,\delta)}
\qquad(\Theta<\delta\le\Theta'),
\]
so the tower is extended by the identity; for $\gamma\in\supp(p)$ let
\[
q_\gamma:=\nm Y^{p}_\gamma\cup\bigl(\nm X_\gamma\cap
[\Theta,\Theta')\bigr),
\]
and, if $\alpha\notin\supp(p)$, activate the coordinate $\alpha$ by
\[
q_\alpha:= f^{q}_{\Theta'}\bigl[\nm X_\alpha\cap
\Theta'\bigr].
\]
Since $f^{q}_{\Theta'}\rest[\Theta,\Theta')=\mathrm{id}$, the pair
$\Theta_0=\Theta<\Theta_1=\Theta'$ witnesses $q\le p$: (a) is clear, (b)
holds because the identity fixes $\nm X_\gamma\cap[\Theta,\Theta')$
pointwise, and (c) by construction. Hence $q\in D_{\alpha,\Theta'}$ and $D_{\alpha,\Theta'}$ is
dense open.
\end{proof}

\subsection{Strategic closure: item 1)}\label{sec:closure}

\begin{claim}\label{claim:fusion}
Let $0<\rho<\kappa$ and let
$\vec p=\langle p_i:i<\rho\rangle$ be decreasing in $\mathbb P_\beta$.
Suppose that, for all $i<j<\rho$, the following statement holds:
\begin{equation*}
(\bigstar_{i,j})\qquad
\begin{gathered}
p_j\le p_i\text{ is witnessed by the single pair }
\hgt(p_i)\le\hgt(p_j),\\
f^{p_j}_{\hgt(p_j)}\rest\hgt(p_i)=f^{p_i}_{\hgt(p_i)}.
\end{gathered}
\end{equation*}
Then:
\begin{enumerate}[label=\textup{(C\arabic*)}]
\item there is a lower bound $p_\rho$ for $\vec{p}$ such that
$(\bigstar_{i,j})$ holds for all $i<j\le\rho$. Moreover, there is a unique
such $p_\rho$ whose support, tower part, and coordinate names are obtained by
taking the unions of those of the $p_i$; it satisfies
\[
\hgt(p_\rho)=\sup_{i<\rho}\hgt(p_i).
\]
We call it the \emph{$\bigstar$-limit} of $\vec p$;
\item if moreover $\rho=\sigma+1$, i.e.\ $\vec p$ has a last element
$p_\sigma$, and $q\le p_\sigma$, then there is a lower bound
$p_\rho\le q$ satisfying $(\bigstar_{i,\rho})$ for all $i<\rho$.
\end{enumerate}
\end{claim}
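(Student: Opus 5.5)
For (C1) I will define $p_\rho$ by taking unions. Put $\Theta_\rho:=\sup_{i<\rho}\hgt(p_i)$, and $\supp(p_\rho):=\bigcup_{i<\rho}\supp(p_i)$, which has size ${<}\kappa$ because $\kappa$ is regular and $\rho<\kappa$. For $\delta<\Theta_\rho$ the levels are already determined: the towers end-extend one another, so $f^{p_\rho}_\delta:=f^{p_i}_\delta$ for any $i$ with $\hgt(p_i)\ge\delta$. If the supremum is attained there is nothing more to define at the top. Otherwise set
\[
f^{p_\rho}_{\Theta_\rho}:=\bigcup_{i<\rho}f^{p_i}_{\hgt(p_i)} ,
\]
which is a function by the second clause of $(\bigstar_{i,j})$. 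It is an involution of $\Theta_\rho$, since each $f^{p_i}_{\hgt(p_i)}$ is an involution of $\hgt(p_i)$. For $\delta'<\Theta_\rho$ pick $i$ with $\hgt(p_i)>\delta'$. Then
\[
f^{p_\rho}_{\Theta_\rho}\rest\delta'=f^{p_i}_{\hgt(p_i)}\rest\delta'\eqs f^{p_i}_{\delta'},
\]
so the tower clause of (P1) holds. For a coordinate $\gamma\in\supp(p_\rho)$ I take $\nm Y^{p_\rho}_\gamma$ to be the name for $\bigcup\{\nm Y^{p_i}_\gamma: i<\rho,\ \gamma\in\supp(p_i)\}$. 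By (P2)(c) these sets end-extend one another, as forced by the relevant restrictions.

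The key verification is (P1(a)) at each coordinate, and this is where $(\bigstar)$ is essential. Fix $\gamma$ and the least $i_0$ with $\gamma\in\supp(p_{i_0})$. Because every $p_j\le p_i$ has the single block $[\hgt(p_i),\hgt(p_j))$, (P2)(b) gives exactness on that block:
\[
\nm Y^{p_j}_\gamma\cap[\hgt(p_i),\hgt(p_j))=f^{p_j}_{\hgt(p_j)}\bigl[\nm X_\gamma\cap[\hgt(p_i),\hgt(p_j))\bigr].
\]
Since the top levels cohere exactly, the union satisfies
\[
\nm Y^{p_\rho}_\gamma\cap[\hgt(p_{i_0}),\Theta_\rho)=f^{p_\rho}_{\Theta_\rho}\bigl[\nm X_\gamma\cap[\hgt(p_{i_0}),\Theta_\rho)\bigr]
\]
exactly. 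Below $\hgt(p_{i_0})$ we use (P1(a)) for $p_{i_0}$ together with $f^{p_\rho}_{\Theta_\rho}\rest\hgt(p_{i_0})=f^{p_{i_0}}_{\hgt(p_{i_0})}$. So the total error is finite, not an accumulation of $\rho$ many finite errors.

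The forcing statements have to be made by $p_\rho\rest\gamma$. I will therefore run a simultaneous induction on $\gamma$, as in the limit step of \S\ref{subsec:limit}, showing both that $p_\rho\rest\gamma\in\mathbb P_\gamma$ and that $p_\rho\rest\gamma\le p_i\rest\gamma$ via the single pair $\hgt(p_i)\le\Theta_\rho$. Granting this, $p_\rho\rest\gamma$ forces all the statements that $p_i\rest\gamma$ forces, and limit stages of the induction pass through unions. The clauses of $(\bigstar_{i,\rho})$ are then immediate: (a) for the single block follows from each $f^{p_j}_{\hgt(p_j)}$ preserving $[\hgt(p_i),\hgt(p_j))$, (b) from the exactness just shown, and (c) from end-extension. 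Uniqueness holds because a union-defined condition is determined by these data; if the supremum is attained, the top level is already $f^{p_j}_{\hgt(p_j)}$ for the $j$ achieving it.

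For (C2), let $q\le p_\sigma$ with witness $\Theta_0=\hgt(p_\sigma)<\dots<\Theta_n=\hgt(q)$. The plan is to merge the blocks into one while keeping the top level unchanged below $\hgt(p_\sigma)$. Put $\Theta^*:=\hgt(q)+1$, or $\hgt(q)$ itself when $n\le1$ and the top level of $q$ already agrees with $f^{p_\sigma}_{\hgt(p_\sigma)}$ below $\hgt(p_\sigma)$. Define
\[
f^{r}_{\Theta^*}:=f^{p_\sigma}_{\hgt(p_\sigma)}\cup\bigl(f^{q}_{\hgt(q)}\rest[\hgt(p_\sigma),\hgt(q))\bigr)\cup\mathrm{id}_{\{\hgt(q)\}}.
\]
This is an involution of $\Theta^*$ because $f^q_{\hgt(q)}$ preserves $[\hgt(p_\sigma),\hgt(q))$; this follows from (P2)(a), since each block is preserved and $f^{q}_{\hgt(q)}$ agrees with the block levels only mod finite. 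I must check this carefully and, if needed, fix the finitely many bad points by hand, which (P1) allows. The new level agrees mod finite with $f^q_{\hgt(q)}$, so the tower coherence clause holds.

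For each $\gamma\in\supp(q)$ set
\[
\nm Y^{r}_\gamma:=\nm Y^{p_\sigma}_\gamma\cup f^{r}_{\Theta^*}\bigl[\nm X_\gamma\cap[\hgt(p_\sigma),\Theta^*)\bigr]
\]
for $\gamma\in\supp(p_\sigma)$, and $\nm Y^{r}_\gamma:=f^{r}_{\Theta^*}[\nm X_\gamma\cap\Theta^*]$ for new coordinates. I will verify $r\le q$ with witness $\hgt(q)<\Theta^*$. Clause (c) requires
\[
\nm Y^{r}_\gamma\cap\hgt(q)=\nm Y^q_\gamma,
\]
and this is the main obstacle: $\nm Y^q_\gamma$ was defined using block levels $f^q_{\Theta_{k+1}}$ that may differ finitely from $f^r_{\Theta^*}$. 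If equality fails, I will instead set $\nm Y^r_\gamma\cap\hgt(q):=\nm Y^q_\gamma$ and arrange $r\le p_i$ through $(\bigstar_{i,\rho})$. This works if $(\bigstar)$'s single-block clause (P2)(b) allows, for $i=\sigma$, the finitely many discrepancies to be absorbed into the top level $f^r_{\Theta^*}$, by redefining $f^r_{\Theta^*}$ on finitely many points so that it maps $\nm X_\gamma$ onto $\nm Y^q_\gamma$ exactly. I expect this to require choosing $r\rest\gamma$ to decide the relevant finite sets. I will do this using density of decided conditions, proved inductively along $\gamma$ in the same simultaneous induction as in the limit step. Ultimately $r$ satisfies $(\bigstar_{i,\rho})$ for all $i<\rho$, by transitivity through $p_\sigma$ and the exact agreement of top levels below $\hgt(p_\sigma)$.
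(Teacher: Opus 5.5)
Your (C1) is correct and is essentially the paper's argument: you take the union of the chain of top levels, get exactness above $\hgt(p_{i_0})$ from the single-block clauses, incur mod-finite error only below $\hgt(p_{i_0})$, and run a simultaneous induction on $\gamma$.

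\textbf{The gap is in (C2)}, and it bites when the witness $\Theta_0<\dots<\Theta_n$ of $q\le p_\sigma$ has $n\ge 2$. Write $B_j:=[\Theta_j,\Theta_{j+1})$.

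You build the new top level from $f^q_{\hgt(q)}\rest[\hgt(p_\sigma),\hgt(q))$. But (P2)(a) constrains only the block levels $f^q_{\Theta_{j+1}}$, and $f^q_{\hgt(q)}$ is merely $\eqs$ to them. So it may move finitely many points across block boundaries, or even send points of $[\hgt(p_\sigma),\hgt(q))$ below $\hgt(p_\sigma)$. In that case your $f^r_{\Theta^*}$ is not even injective: mod-finite agreement with the block levels does not give preservation of the interval.

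More seriously, consider any $\gamma\in\supp(p_\sigma)$. Clause (P2)(c) for $r\le q$ together with single-block (P2)(b) for $r\le p_\sigma$ requires $f^r_{\Theta^*}[\nm X_\gamma\cap[\hgt(p_\sigma),\hgt(q))]$ to equal $\nm Y^q_\gamma\cap[\hgt(p_\sigma),\hgt(q))=\bigcup_{j<n}f^q_{\Theta_{j+1}}[\nm X_\gamma\cap B_j]$ exactly. With $f^q_{\hgt(q)}$ in place of the block levels, this will in general fail on a finite, generic-dependent set.

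Your definition $\nm Y^r_\gamma:=f^r_{\Theta^*}[\nm X_\gamma\cap\Theta^*]$ for $\gamma\in\supp(q)\setminus\supp(p_\sigma)$ also breaks (P2)(c) for $r\le q$, since $\nm Y^q_\gamma$ is only $\eqs$ to such an image.

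Your proposed repair, per-coordinate point fixing after deciding the discrepancies via density of decided conditions, does not close the gap:
\begin{itemize}
\item An extension of $q$ that decides these sets will in general raise the height and have a multi-block witness over $p_\sigma$ again, so the same discrepancy reappears at the new top level.
\item A single top level must serve up to ${<}\kappa$ many coordinates at once, so fixing points coordinate by coordinate is not obviously consistent.
\item In the paper, density of $\mathbb P^0_\beta$ (item 2)) is itself proved using this claim and the Coarsening Lemma.
\end{itemize}

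\textbf{The missing idea is the \emph{diagonal} of the witness:}
\[
s_0:=f^{p_\sigma}_{\hgt(p_\sigma)}\cup\bigcup_{j<n}f^q_{\Theta_{j+1}}\rest B_j .
\]
\begin{itemize}
\item By (P2)(a), $s_0$ is an involution of $\hgt(q)$ preserving each block.
\item Since there are only finitely many blocks, tower coherence in $q$ gives $s_0\eqs f^q_{\hgt(q)}$.
\end{itemize}
Now build $r$ as follows. Keep $q$'s tower and add the levels $s_0\cup\mathrm{id}_{[\hgt(q),\delta)}$ for $\hgt(q)<\delta\le\Theta''$. For \emph{every} $\gamma\in\supp(q)$, old or new, put $\nm Y^r_\gamma:=\nm Y^q_\gamma\cup(\nm X_\gamma\cap[\hgt(q),\Theta''))$. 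Then:
\begin{itemize}
\item (P2)(c) for $r\le q$ holds by construction.
\item (P1(a)) holds because $s_0\eqs f^q_{\hgt(q)}$.
\item Single-block (P2)(b) for $r\le p_\sigma$ holds exactly, uniformly in $\gamma$ and with no decisions needed. Blockwise, (P2)(b) for $q\le p_\sigma$ gives $\nm Y^q_\gamma\cap B_j=f^q_{\Theta_{j+1}}[\nm X_\gamma\cap B_j]=s_0[\nm X_\gamma\cap B_j]$.
\end{itemize}
This is exactly the paper's Coarsening Lemma applied with $s=s_0$. For $n=1$ your formula already equals $s_0$, so there only the treatment of new coordinates needs fixing.

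Finally, $(\bigstar_{i,\rho})$ for $i<\sigma$ is not plain transitivity, which would only give the two-block witness $\hgt(p_i)<\hgt(p_\sigma)<\Theta''$. You must glue the invariant pieces $[\hgt(p_i),\hgt(p_\sigma))$ and $[\hgt(p_\sigma),\Theta'')$ using $f^r_{\Theta''}\rest\hgt(p_\sigma)=f^{p_\sigma}_{\hgt(p_\sigma)}$ and $(\bigstar_{i,\sigma})$, as you indicate.
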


\begin{proof}[Proof of \textup{(C1)}]
If $\rho=\sigma+1$ is a successor, put $p_\rho:=p_\sigma$; the trivial
witness shows that this is a suitable lower bound. Assume that $\rho$ is
a limit ordinal, put
$\Theta_i:=\hgt(p_i)$ and $\Theta_\rho:=\sup_{i<\rho}\Theta_i$, and define
\[
f^*:=\bigcup_{i<\rho}f^{p_i}_{\Theta_i}.
\]
The second clause of $(\bigstar_{i,j})$ says that the functions in this
union form an increasing chain. Hence $f^*$ is a function on
$\Theta_\rho$, and $f^*\rest\Theta_i=f^{p_i}_{\Theta_i}$ for every
$i<\rho$. Moreover, each interval $[\Theta_i,\Theta_{i+1})$ is
$f^*$-invariant by (P2)(a), so $f^*$ is an involution of $\Theta_\rho$.
For $\delta\le\Theta_i$, tower coherence inside $p_i$ gives
\[
f^*\rest\delta=f^{p_i}_{\Theta_i}\rest\delta\eqs f^{p_i}_\delta.
\]

Let $p_\rho$ have support $\bigcup_{i<\rho}\supp(p_i)$, tower part the
union of the tower parts of the $p_i$ extended with top level $f^*$, and,
for every coordinate in its support,
\[
\nm Y^{p_\rho}_\gamma:=\bigcup_{i<\rho}\nm Y^{p_i}_\gamma.
\]
The support still has size ${<}\kappa$. To check \eqref{eq:P1a}, fix
$\gamma\in\supp(p_\rho)$ and choose $i_0<\rho$ with
$\gamma\in\supp(p_{i_0})$. For every $i\ge i_0$, (P2)(b) for
$p_{i+1}\le p_i$, together with the pointwise equality of $f^*$ and
$f^{p_{i+1}}_{\Theta_{i+1}}$ below $\Theta_{i+1}$, gives
\[
f^*\bigl[\nm X_\gamma\cap[\Theta_i,\Theta_{i+1})\bigr]
=\nm Y^{p_\rho}_\gamma\cap[\Theta_i,\Theta_{i+1}).
\]
Their union is the required exact equality above $\Theta_{i_0}$; below
$\Theta_{i_0}$, \eqref{eq:P1a} for $p_{i_0}$ gives equality modulo finite.
Thus $p_\rho$ is a condition. The same calculation shows that
$p_\rho\le p_i$ is witnessed by the single pair
$\Theta_i\le\Theta_\rho$; clause (c) follows because the coordinate names
end-extend along the sequence. Hence $(\bigstar_{i,\rho})$ holds for every
$i<\rho$. The displayed unions also give the asserted uniqueness.
\end{proof}

Given $q\le p$ with witness $\Theta_0<\dots<\Theta_n$, define the
\emph{diagonal} of the witness to be the involution $s_0$ of $\hgt(q)$
given by
\begin{equation}\label{eq:diagonal}
s_0\rest\Theta_0:=f^{q}_{\Theta_0}=f^p_{\Theta_0},\qquad
s_0\rest[\Theta_j,\Theta_{j+1}):=
f^{q}_{\Theta_{j+1}}\rest[\Theta_j,\Theta_{j+1})\quad(j<n).
\end{equation}
Each piece is an involution of its interval by (P2)(a).

\begin{lemma}[Coarsening]\label{lem:coarsening}
Let $q\le p$ in $\mathbb P_\beta$ have witness
$\Theta_0<\dots<\Theta_n$, with diagonal $s_0$, and let $s$ be an
involution of $\hgt(q)$ such that
\begin{enumerate}[label=\textup{(s\arabic*)}]
\item\label{s:diag} $s\rest[\Theta_0,\hgt(q))=s_0\rest[\Theta_0,\hgt(q))$;
\item\label{s:modfin} $s\rest\Theta_0\eqs f^{q}_{\Theta_0}$;
\item\label{s:inv} $s[\Theta_0]=\Theta_0$.
\end{enumerate}
Then for every $\Theta''>\hgt(q)$ there is $r\le q$ such that
\[
\hgt(r)=\Theta'',\qquad f^{r}_{\Theta''}\rest\hgt(q)=s,
\]
and $r\le p$ is witnessed by the single pair $\Theta_0<\Theta''$.
\end{lemma}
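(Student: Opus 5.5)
We construct $r$ explicitly, as in the proof of item 0): the tower is extended by the identity above $\hgt(q)$, with top level built from $s$, and every coordinate name of $q$ is recomputed so that the single pair $\Theta_0<\Theta''$ witnesses $r\le p$.

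\textbf{The tower.} Write $\Theta'=\hgt(q)$. Keep the tower of $q$ unchanged up to $\Theta'$, so that $f^r_\delta=f^q_\delta$ for $\delta\le\Theta'$. For $\Theta'<\delta\le\Theta''$ put
\[
f^r_\delta:=s\cup\mathrm{id}_{[\Theta',\delta)} .
\]
Each such level is an involution because $s$ is. Coherence $f^r_\delta\rest\delta'\eqs f^r_{\delta'}$ is automatic between new levels. Between a new level and an old level $\delta'\le\Theta'$ it reduces to showing
\[
s\rest\delta'\eqs f^q_{\delta'} .
\]
Since $s$ and $f^q_{\Theta'}$ are involutions of $\Theta'$, it suffices to show $s\eqs f^q_{\Theta'}$ and then apply coherence inside $q$. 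Below $\Theta_0$ this follows from (s2) together with $f^q_{\Theta'}\rest\Theta_0\eqs f^q_{\Theta_0}$. On each block $[\Theta_j,\Theta_{j+1})$ it follows from (s1) together with $f^q_{\Theta'}\rest\Theta_{j+1}\eqs f^q_{\Theta_{j+1}}$. Since there are only finitely many pieces, the total error is finite. Clause (P2)(a) for the pair $\Theta_0<\Theta''$ says that $f^r_{\Theta''}$ maps $[\Theta_0,\Theta'')$ onto itself. This holds because of (s3), because $s$ is a bijection of $\Theta'$, and because the part above $\Theta'$ is the identity. End-extension of $f^p$ and of $f^q$ is clear.

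\textbf{The coordinates.} Set $\supp(r)=\supp(q)$. For $\gamma\in\supp(p)$ define
\[
\nm Y^r_\gamma:=\nm Y^p_\gamma\cup f^r_{\Theta''}\bigl[\nm X_\gamma\cap[\Theta_0,\Theta'')\bigr].
\]
Clauses (b) and (c) for $r\le p$ then hold by definition. For $\gamma\in\supp(q)\setminus\supp(p)$, put
\[
\nm Y^r_\gamma:=\nm Y^q_\gamma\cup\bigl(\nm X_\gamma\cap[\Theta',\Theta'')\bigr].
\]

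\textbf{$r$ is a condition and $r\le q$.} Membership (P1(a)) for each coordinate is the same finite-error computation as above. One must check this coordinate by coordinate, by induction along $\supp(q)$ exactly as in the limit-step argument of \S\ref{subsec:limit}, since each name is evaluated below $r\rest\gamma$.

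The key step, and the main obstacle, is checking $r\le q$, for which I use the two-element witness $\Theta'<\Theta''$.
\begin{itemize}
\item Clause (a) holds because $f^r_{\Theta''}$ is the identity on $[\Theta',\Theta'')$.
\item For $\gamma\in\supp(q)\setminus\supp(p)$, clauses (b) and (c) hold by construction.
\item For $\gamma\in\supp(p)$, clause (c) requires $\nm Y^r_\gamma\cap\Theta'=\nm Y^q_\gamma$, and this equality must hold exactly, not merely modulo finite. By (P2)(b) and (c) for $q\le p$, the name $\nm Y^q_\gamma$ equals $\nm Y^p_\gamma$ below $\Theta_0$ and equals $s_0[\nm X_\gamma]$ on each block. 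On $[\Theta_0,\Theta')$ our new name equals $s[\nm X_\gamma]$, which coincides with $s_0[\nm X_\gamma]$ there by (s1). Here we use (s3): it guarantees that no point of $[\Theta_0,\Theta')$ is sent below $\Theta_0$, so the image of $\nm X_\gamma\cap[\Theta_0,\Theta')$ under $s$ is exactly the union of the blockwise images under $s_0$. Below $\Theta_0$ the two names agree because both are $\nm Y^p_\gamma$.
\item Clause (b) on $[\Theta',\Theta'')$ holds because $f^r_{\Theta''}$ is the identity there.
\end{itemize}

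This exact agreement is what makes $r\le q$ and $r\le p$ hold simultaneously, and it uses all three hypotheses (s1)–(s3).
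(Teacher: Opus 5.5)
Your proof is correct and is essentially the paper's construction: the same tower $s\cup\mathrm{id}_{[\hgt(q),\delta)}$, the same single-pair witnesses, and the same coordinatewise induction. The only difference is bookkeeping. For $\gamma\in\supp(p)$ you choose $\nm Y^r_\gamma$ so that $r\le p$ is immediate and derive $r\le q$ from (P2)(b),(c) for $q\le p$ together with (s1), whereas the paper sets $\nm Y^r_\gamma:=\nm Y^q_\gamma\cup(\nm X_\gamma\cap[\hgt(q),\Theta''))$ and derives $r\le p$; the two names are forced equal below $q\rest\gamma$, and the block-invariance you attribute to (s3) already follows from (s1) and (P2)(a).
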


\begin{proof}
Define $r$ by
\begin{align*}
r_0&:=\langle f^{q}_\delta:\delta\le\hgt(q)\rangle{}^\frown
\langle s\cup\mathrm{id}_{[\hgt(q),\delta)}:
\hgt(q)<\delta\le\Theta''\rangle,\\
r_\gamma&:=\nm Y^{q}_\gamma\cup
\bigl(\nm X_\gamma\cap[\hgt(q),\Theta'')\bigr)
\qquad(\gamma\in\supp(q)),\\
r_\gamma&:=\emptyset\qquad
(\gamma\in\beta\setminus(\supp(q)\cup\{0\})).
\end{align*}
It is straightforward to check that the displayed tower is coherent. Indeed,
the new levels agree pointwise above $\hgt(q)$, while
$s\eqs f^q_{\hgt(q)}$: on each old witness block this follows from
\ref{s:diag} and tower coherence inside $q$, and below $\Theta_0$ it follows
from \ref{s:modfin}; only finitely many errors are involved.

We verify simultaneously, by induction on $\xi\le\beta$, that
$r\rest\xi$ is a condition, that it extends both $q\rest\xi$ and
$p\rest\xi$ with the indicated single-pair witnesses, and that, whenever
$\gamma<\xi$ belongs to $\supp(q)$ (for \eqref{eq:coarse-old}, to
$\supp(p)$),
\begin{align}
r\rest\gamma&\force
f^r_{\Theta''}\bigl[\nm X_\gamma\cap[\Theta_0,\Theta'')\bigr]
=\nm Y^r_\gamma\cap[\Theta_0,\Theta''),\label{eq:coarse-old}\\
r\rest\gamma&\force
f^r_{\Theta''}\bigl[\nm X_\gamma\cap[\hgt(q),\Theta'')\bigr]
=\nm Y^r_\gamma\cap[\hgt(q),\Theta'').\label{eq:coarse-new}
\end{align}
There is nothing to do at a coordinate outside $\supp(q)$. At a coordinate
$\gamma\in\supp(q)$, \eqref{eq:coarse-new} follows from the definition of
$r_\gamma$, since the new part of $f^r_{\Theta''}$ is the identity. On an
old witness block $B_j=[\Theta_j,\Theta_{j+1})$, (P2)(b) for $q\le p$
(applicable as $\gamma\in\supp(p)$) and \ref{s:diag} give
\[
\nm Y^r_\gamma\cap B_j=\nm Y^q_\gamma\cap B_j
=f^q_{\Theta_{j+1}}[\nm X_\gamma\cap B_j]
=f^r_{\Theta''}[\nm X_\gamma\cap B_j].
\]
Together with \eqref{eq:coarse-new}, these equalities yield
\eqref{eq:coarse-old}. They give (P2)(b) for both extensions; (P2)(a)
follows from the invariance of the old blocks and the new identity block,
and (P2)(c) follows directly from the definition of $r_\gamma$. Finally,
\eqref{eq:P1a} for $r$ follows from \eqref{eq:P1a} for $q$, the relation
$s\eqs f^q_{\hgt(q)}$, and the exact equality on the new block. This proves
the induction and hence all the asserted properties of $r$.
\end{proof}

\begin{proof}[Proof of \textup{(C2)}]
Let $\rho=\sigma+1$, let $q\le p_\sigma$, and let $s_0$ be the diagonal
of a witness for this extension. Apply Lemma~\ref{lem:coarsening} with
$s=s_0$ and any $\Theta''>\hgt(q)$, and call the resulting condition
$p_\rho$. Since towers end-extend,
\begin{equation}\label{eq:diagbase}
f^{p_\rho}_{\Theta''}\rest\hgt(p_\sigma)
=s_0\rest\hgt(p_\sigma)
=f^{p_\sigma}_{\hgt(p_\sigma)}.
\end{equation}
Thus $(\bigstar_{\sigma,\rho})$ holds. For $i<\sigma$, equation
\eqref{eq:diagbase} and $(\bigstar_{i,\sigma})$ give the required
pointwise agreement below $\hgt(p_i)$. Moreover,
$p_\rho\le p_i$ is witnessed by the single pair
$\hgt(p_i)\le\Theta''$: on
$[\hgt(p_i),\hgt(p_\sigma))$ its clauses follow from
$(\bigstar_{i,\sigma})$ and \eqref{eq:diagbase}, and on
$[\hgt(p_\sigma),\Theta'')$ they follow from
$p_\rho\le p_\sigma$; taking the union of these two invariant pieces gives
the single-block clause (P2)(b), while (P2)(c) composes. Hence
$(\bigstar_{i,\rho})$ holds for every $i<\rho$.
\end{proof}

\begin{proof}[Proof of \textup{1)}]
We describe a strategy for Player II in the descending game of length
$\kappa$ on $\mathbb P_\beta$. She maintains the invariant that her own
moves, in their natural order, satisfy $(\bigstar_{i,j})$ of
Claim~\ref{claim:fusion}. After a move by Player I, clause (C2) supplies an
extension which restores this invariant. At a limit turn, Player II plays
the $\bigstar$-limit supplied by (C1). These moves are available at every
stage below $\kappa$: regularity keeps all heights below $\kappa$, and the
union of fewer than $\kappa$ supports of size ${<}\kappa$ again has size
${<}\kappa$. Thus Player II survives the game of length $\kappa$.
\end{proof}


Consequently $\mathbb P_\beta$ is ${<}\kappa$-distributive: it adds no new
sequences of length $<\kappa$, so bounded subsets of $\kappa$, cardinals
and cofinalities $\le\kappa$, and the arithmetic condition
$2^{<\kappa}=\kappa$ are preserved.

\subsection{Decided conditions are dense: item 2)}\label{sec:decided}

We first record a simple observation.

\begin{observation}\label{obs:decided-limit}
Let $\langle p_i:i\le\delta\rangle\subseteq\mathbb P_\alpha$ be decreasing,
and suppose that, for every $\gamma\in\supp(p_\delta)$ and every
$\Theta <\hgt(p_\delta)$, there is an $i\le\delta$ such that
$p_i\rest\gamma$ decides
$\nm Y^{p_\delta}_\gamma\cap\Theta$. Then
$p_\delta\in\mathbb P^0_\alpha$.
\end{observation}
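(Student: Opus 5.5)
Fix $\gamma\in\supp(p_\delta)$ and put $\Theta^*:=\hgt(p_\delta)$. The plan is to show directly that $p_\delta\rest\gamma$ forces $\nm Y^{p_\delta}_\gamma=\check A$ for a single ground-model set $A\in[\kappa]^{<\kappa}$. This is exactly the requirement for membership in $\mathbb P^0_\alpha$. Coordinates outside the support have $p_\gamma=\emptyset$ and need nothing.

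\textbf{Step 1: transfer the decisions to $p_\delta\rest\gamma$.} Since $p_\delta\le p_i$ for every $i\le\delta$, Observation~\ref{obs:basic}(iii) gives $p_\delta\rest\gamma\le p_i\rest\gamma$. Hence everything forced by $p_i\rest\gamma$ is also forced by $p_\delta\rest\gamma$. So for each $\Theta<\Theta^*$ the hypothesis yields a set $A_\Theta\subseteq\Theta$ in $V$ with
\[
p_\delta\rest\gamma\force \nm Y^{p_\delta}_\gamma\cap\Theta=\check A_\Theta .
\]

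\textbf{Step 2: the local decisions cohere.} For $\Theta<\Theta'<\Theta^*$, the condition $p_\delta\rest\gamma$ forces $\check A_{\Theta'}\cap\Theta=\check A_\Theta$. Both sides are ground-model sets and $p_\delta\rest\gamma$ is a genuine condition, so $A_{\Theta'}\cap\Theta=A_\Theta$ holds in $V$. Put $A:=\bigcup_{\Theta<\Theta^*}A_\Theta$. Then $A\subseteq\Theta^*<\kappa$, so $A\in[\kappa]^{<\kappa}$.

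\textbf{Step 3: assemble.} By \eqref{eq:P1a}, $p_\delta\rest\gamma$ forces $\nm Y^{p_\delta}_\gamma\subseteq\Theta^*$. When $\Theta^*$ is a limit ordinal we have $\Theta^*=\bigcup_{\Theta<\Theta^*}\Theta$, and therefore
\[
p_\delta\rest\gamma\force\nm Y^{p_\delta}_\gamma=\bigcup_{\Theta<\Theta^*}\bigl(\nm Y^{p_\delta}_\gamma\cap\Theta\bigr)=\check A .
\]
Thus $p_\delta\rest\gamma$ decides $p_{\delta,\gamma}$, and as $\gamma$ was arbitrary, $p_\delta\in\mathbb P^0_\alpha$.

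\textbf{Main obstacle: successor heights.} The only delicate point is the bookkeeping at the top of the height. If $\Theta^*=\eta+1$, the sets $\Theta<\Theta^*$ only cover $\eta$, so whether $\eta\in\nm Y^{p_\delta}_\gamma$ is not handled by Step 3. There I would read the hypothesis as applying to all initial segments $\Theta\le\Theta^*$, which is how it is used in the limit constructions where the observation is applied. With $\Theta=\Theta^*$ allowed, the conclusion is immediate. Beyond this, the argument is just monotonicity of forcing along $\le$ together with absoluteness of equality for check names.
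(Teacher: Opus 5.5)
Your argument is the paper's own: transfer each decision to $p_\delta\rest\gamma$ using $p_\delta\rest\gamma\le p_i\rest\gamma$, then take the union of the coherent ground-model pieces. Your worry about successor heights is justified: as literally written, the statement can fail when $\hgt(p_\delta)$ is a successor ordinal, and the paper's one-line proof passes over this. The fix, however, is not to strengthen the hypothesis to $\Theta\le\hgt(p_\delta)$, which the applications in item 2) never verify directly; in those applications $\hgt(p_\delta)$ is always the supremum of a strictly increasing sequence of heights of limit length, hence a limit ordinal, so your Step~3 already covers every use.
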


Indeed, for each $\gamma\in\supp(p_\delta)$, the condition
$p_\delta\rest\gamma$ decides every initial segment of
$\nm Y^{p_\delta}_\gamma$. Their union is a ground-model subset of
$\hgt(p_\delta)$.

\begin{proof}[Proof of \textup{2)}]
By induction on $\beta\leq \kappa^+$.

The case $\beta=0$ is immediate.

\emph{Successor $\beta+1$, assuming $\mathbb P^{0}_\beta$ dense.} Let
$p\in\mathbb P_{\beta+1}$. If $\beta\notin\supp(p)$, then extending
$p\rest\beta$ into $\mathbb P^{0}_\beta$ and appending $\emptyset$
suffices; so assume $\beta\in\supp(p)$ and put $\Theta:=\hgt(p)$.

Put $r_0:=p$. Recursively, having chosen $r_i$, write
$h_i:=\hgt(r_i)$. Using ${<}\kappa$-distributivity and the density of
$\mathbb P^0_\beta$, choose $u_i\le r_i\rest\beta$ in
$\mathbb P^0_\beta$ so that $u_i$ decides
$\nm X_\beta\cap h_i$ and $\nm Y^{r_i}_\beta$, and, for every
$\gamma\in\supp(r_i)\cap\beta$, $u_i\rest\gamma$ decides
$\nm X_\gamma\cap h_i$ and $\nm Y^{r_i}_\gamma$.

Lift $u_i$ through the last coordinate as in the successor construction,
and call the resulting condition $\widetilde r_i\le r_i$; thus
$\widetilde r_i\rest\beta=u_i$. Apply Lemma~\ref{lem:coarsening} in
$\mathbb P_{\beta+1}$ to $\widetilde r_i\le r_i$, using the diagonal of
a witness for this extension. This gives
$r_{i+1}\le\widetilde r_i$ such that $r_{i+1}\le r_i$ is witnessed by the
single pair
\[
h_i<h_{i+1}:=\hgt(r_{i+1}),
\qquad
f^{r_{i+1}}_{h_{i+1}}\rest h_i=f^{r_i}_{h_i}.
\]
Consequently, its last coordinate is
\[
\nm Y^{r_{i+1}}_\beta
=\nm Y^{r_i}_\beta\cup
f^{r_{i+1}}_{h_{i+1}}
 \bigl[\nm X_\beta\cap[h_i,h_{i+1})\bigr].
\]
In particular, $(\bigstar_{i,j})$ holds for all $i<j<\omega$.

Let $r_\omega$ be the $\bigstar$-limit of
$\langle r_i:i<\omega\rangle$. Apply Observation~\ref{obs:decided-limit}
to the decreasing sequence obtained by interleaving the $r_i$ and the
$\widetilde r_i$ and appending $r_\omega$. Indeed, if
$\gamma\in\supp(r_\omega)$ and $\Xi<\hgt(r_\omega)$, choose $i$ so large
that $\gamma\in\supp(r_i)$ and $\Xi<h_i$. Then
$\widetilde r_i\rest\gamma$ decides
$\nm X_\gamma\cap\Xi$, and it also decides
$\nm Y^{r_\omega}_\gamma\cap\Xi$, since this set equals
$\nm Y^{r_i}_\gamma\cap\Xi$ by the end-extension clause. Hence
$r_\omega\in\mathbb P^0_{\beta+1}$, and $r_\omega\le p$.

\emph{Limit $\beta$.} If $\cf(\beta)\ge\kappa$, then by the
${<}\kappa$-support limit every $p\in\mathbb P_\beta$ lies in some
$\mathbb P_\alpha$, $\alpha<\beta$, and we are done by induction. So
assume $\nu:=\cf(\beta)<\kappa$ and fix a continuous increasing sequence
$\langle\alpha_i:i<\nu\rangle$ cofinal in $\beta$. Given
$p\in\mathbb P_\beta$, recursively construct
$p\ge q_0 \ge q_0' \ge q_1\ge\cdots \geq q_i \geq q_i' \geq q_{i+1} \geq \cdots$ ($i<\nu$) such that
\[
q'_i\rest\alpha_i\in\mathbb P^{0}_{\alpha_i}\quad(i<\nu),
\qquad\text{and}\qquad
\langle q_i:i<\nu\rangle\ \text{satisfies}\
(\bigstar_{i,j})\ \text{of Claim~\ref{claim:fusion}}.
\]
Put $q_0:=p$. Given $q_i$, apply
the density of $\mathbb P^0_{\alpha_i}$ in $\mathbb P_{\alpha_i}$ to $q_i\rest\alpha_i$, and lift the resulting condition
through the tail exactly as in \S\ref{subsec:limit} to obtain $q_i'\le q_i$. Then apply Lemma \ref{lem:coarsening} to $q_i' \le q_i$;
by (C2) the resulting $q_{i+1}$ satisfies the required $\bigstar$ relations with all preceding $q_j$.

At a nonzero limit stage $i<\nu$, let $q_i$ be the $\bigstar$-limit of
$\langle q_j:j<i\rangle$. The invariant is preserved by (C1).

Finally, let $q$ be the $\bigstar$-limit of
$\langle q_i:i<\nu\rangle$, and use Observation \ref{obs:decided-limit}.
\end{proof}

\subsection{The chain condition: item 3)}\label{sec:cc}

\begin{proof}[Proof of \textup{3)}]
Let $\{p^{\iota}:\iota<\kappa^{+}\}\subseteq\mathbb P_{\kappa^{+}}$. By
item 2) we may assume each $p^{\iota}\in\mathbb P^{0}_{\kappa^{+}}$: each
coordinate is either $\emptyset$ or $y$ with
$y\in\Pw(\hgt(p^\iota))\cap V$. There are $\kappa$ many possible heights
and, since $\kappa$ is inaccessible, at most
$2^{<\kappa}=\kappa$ many tower parts; refine to constant height
$\Theta$ and constant tower part. Supports have size $<\kappa$, and
$(\kappa^{+})^{<\kappa}=\kappa^{+}$, $\alpha^{<\kappa}<\kappa^{+}$ for
$\alpha<\kappa^{+}$ (by $2^{\kappa}=\kappa^{+}$ and inaccessibility), so
the $\Delta$-system lemma applies: refine to supports forming a
$\Delta$-system with root $R$, and, as there are at most
$\kappa^{<\kappa}=\kappa$ possibilities for the data indexed by $R$,
further so that $p^{\iota}\rest R=p^{\iota'}\rest R$ for all
$\iota,\iota'<\kappa^{+}$ (abusing notation, $p\rest R$ denotes the
$R$-indexed data of $p$). Any two members $p,q$ of the refined family are
compatible: let $r$ have the common tower part and
$r_\gamma:=p_\gamma\cup q_\gamma$ for every $\gamma$ (on $R$ the two
agree; off $R$ at most one of them is nonempty; $\emptyset$ elsewhere).
In particular, every nonempty coordinate of $r$ is literally a coordinate
of either $p$ or $q$, hence is forced to be a ground-model subset of
$\Theta$; thus $r\in\mathbb P^0_{\kappa^+}$ once we verify that it is a
condition.
By induction on $\gamma\le\kappa^{+}$,
\[
r\rest\gamma\in\mathbb P_\gamma,\qquad
r\rest\gamma\le p\rest\gamma\quad\text{and}\quad
r\rest\gamma\le q\rest\gamma\quad\text{with the trivial witness}:
\]
the clauses of (P1) at a coordinate $\gamma$ are forced by
$r\rest\gamma$, since by the inductive hypothesis $r\rest\gamma$ extends
whichever of $p\rest\gamma$, $q\rest\gamma$ owns the coordinate, and the
trivial witness makes (P2)(a)--(c) vacuous or immediate. Hence there is
no antichain of size $\kappa^{+}$.
\end{proof}

\begin{corollary}\label{cor:preservation}
$\mathbb P_{\kappa^{+}}$ preserves cofinalities and cardinals;
$\kappa$ remains inaccessible; and in $V^{\mathbb P_{\kappa^{+}}}$,
$2^{\kappa}=\kappa^{+}$. Moreover every subset of $\kappa$ of the
extension has a nice $\mathbb P_\alpha$-name for some $\alpha<\kappa^{+}$,
and for each $\alpha$ there are at most $\kappa^{+}$ such names; hence the
bookkeeping in the proof of Theorem~\ref{theorem: main} can be realized, and in
$V[G]$ every $X\subseteq\kappa$ equals $\nm X_\gamma[G]$ for cofinally
many $\gamma<\kappa^{+}$, each of which lies in $\supp(p)$ for some $p\in
G$ by item \textup{0)} and genericity. \qed
\end{corollary}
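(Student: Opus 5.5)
The corollary follows from items 1)--3) of Lemma~\ref{lem:mainlemma} by standard arguments, so the plan is to assemble those consequences in order.

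\emph{Cardinals and cofinalities.} By item 1), $\mathbb P_{\kappa^{+}}$ is $\kappa$-strategically closed. Hence it is ${<}\kappa$-distributive and adds no new sequences of length ${<}\kappa$ of ground-model objects. Consequently cofinalities and cardinals $\le\kappa$ are preserved. By item 3), the forcing is $\kappa^{+}$-cc, so cofinalities and cardinals $\ge\kappa^{+}$ are preserved as well.

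\emph{Inaccessibility of $\kappa$.} Regularity is preserved by the previous paragraph. For strong limitness, every subset of some $\mu<\kappa$ in $V[G]$ is a ${<}\kappa$-sequence of ground-model booleans, so it lies in $V$ by distributivity. Thus $(2^{\mu})^{V[G]}=(2^{\mu})^{V}<\kappa$.

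\emph{Counting nice names.} A nice $\mathbb P_\alpha$-name for a subset of $\kappa$ is determined by $\kappa$ many antichains of $\mathbb P_\alpha$, each of size $\le\kappa$ by item 3). So I would bound $|\mathbb P_\alpha|$ first. By induction on $\alpha\le\kappa^{+}$, using item 2), it suffices to count the dense set $\mathbb P^{0}_\alpha$. Each of its conditions is coded by the following data:
\begin{itemize}
\item a height and a tower part, of which there are $\kappa$ possibilities since $2^{<\kappa}=\kappa$;
\item a support in $[\alpha]^{<\kappa}$, of which there are $\le\kappa^{+}$ possibilities since $2^\kappa=\kappa^+$;
\item decided values in $[\kappa]^{<\kappa}$, giving $\kappa^{<\kappa}=\kappa$ possibilities per coordinate.
\end{itemize}
This yields $|\mathbb P^0_\alpha|\le\kappa^{+}$. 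Working with the dense set $\mathbb P^0_\alpha$ is legitimate, since nice names can be taken with antichains from a dense subset. Hence there are at most $(\kappa^{+})^{\kappa}=\kappa^{+}$ nice names, using $2^\kappa=\kappa^+$ in $V$. It follows that $2^\kappa\le\kappa^{+}$ in $V[G]$, and equality holds because cardinals are preserved.

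\emph{Every subset has an early name.} Let $X\subseteq\kappa$ in $V[G]$. By $\kappa^{+}$-cc, a nice $\mathbb P_{\kappa^+}$-name for $X$ involves only $\kappa$ many conditions. Each of these has support of size ${<}\kappa$, and $\kappa^{+}$ is regular, so all supports are bounded below some $\alpha<\kappa^{+}$. Since $\mathbb P_\alpha\lessdot\mathbb P_{\kappa^{+}}$ by (P0), the name is then a $\mathbb P_\alpha$-name.

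\emph{Bookkeeping.} Fix a bookkeeping function that lists each pair $(\alpha,\text{nice }\mathbb P_\alpha\text{-name})$ cofinally often among the stages $\gamma<\kappa^{+}$ with $\gamma\ge\alpha$. This is possible since there are only $\kappa^{+}\cdot\kappa^{+}$ such pairs. The name listed at stage $\gamma$ is a $\mathbb P_\gamma$-name, because $\mathbb P_\alpha\lessdot\mathbb P_\gamma$. Then every $X$ equals $\nm X_\gamma[G]$ for cofinally many $\gamma<\kappa^+$.

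\emph{Membership in supports.} By item 0), each $D_{\gamma,0}$ is dense, so genericity gives some $p\in G$ with $\gamma\in\supp(p)$.

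The main obstacle I expect is that the bookkeeping is circular: $\mathbb P_\gamma$ depends on the earlier choices of the names $\nm X$. I would handle this by a recursive enumeration, which works because each $|\mathbb P_\alpha|\le\kappa^{+}$ is established inductively along the construction.
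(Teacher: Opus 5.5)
Your proposal is correct and follows the standard argument that the paper treats as immediate from items 0)--3) together with the distributivity remark after item 1), so the route is the same. One small repair is needed in the counting step: a condition of $\mathbb P^0_\alpha$ is not literally determined by its height, tower, support and decided values, because its coordinates are names and many distinct nice names can be forced by $p\rest\gamma$ to equal the same $\check A$. Either pass to the dense set of conditions whose nonempty coordinates are the check names $\check A$ (every condition of $\mathbb P^0_\alpha$ is equivalent to one of these), or, more cleanly, bound $|\mathbb P_\alpha|\le\kappa^{+}$ directly by induction, using that for each $\gamma<\alpha$ there are at most $(\kappa^{+})^{\kappa}=\kappa^{+}$ nice $\mathbb P_\gamma$-names.
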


\subsection{The generic automorphism: item 4)}\label{sec:auto}

Let $G\subseteq\mathbb P_{\kappa^{+}}$ be generic. For
$\gamma\in[1,\kappa^{+})$ set $X_\gamma:=\nm X_\gamma[G\rest\gamma]$ and
\[
Y_\gamma:=\bigcup\bigl\{\nm Y^{p}_\gamma[G\rest\gamma]:
p\in G,\ \gamma\in\supp(p)\bigr\}.
\]
Note that the sets in this union are end-extensions of one another: if
$p,q\in G$, $\gamma\in\supp(p)\cap\supp(q)$ and $\hgt(p)\le\hgt(q)$, then
$\nm Y^{q}_\gamma[G\rest\gamma]\cap\hgt(p)=\nm Y^{p}_\gamma[G\rest\gamma]$
--- this follows from (P2)(c), e.g.\ by picking $r\in G$ with $r\le p,q$
and applying (P2)(c) to $r\le p$ and to $r\le q$. By item 0) the heights of conditions in $G$
with $\gamma\in\supp$ are cofinal in $\kappa$, so $Y_\gamma$ is defined and
$Y_\gamma\cap\hgt(p)=\nm Y^{p}_\gamma[G\rest\gamma]$ for such $p$. Define
\[
\FG\bigl([X_\gamma]\bigr):=[Y_\gamma].
\]
By Corollary~\ref{cor:preservation} every $X\in\Pw(\kappa)^{V[G]}$ is of the
form $X_\gamma$, so $\FG$ is defined on all of $\Pw(\kappa)/\Fin$ once we
verify it is well defined on classes.

\begin{lemma}[Coherence]\label{lem:coherence}
For every $\gamma$ as above and every $\delta<\kappa$:
\[
Y_\gamma\cap\delta\ \eqs\ f^{G}_\delta\bigl[X_\gamma\cap\delta\bigr].
\]
\end{lemma}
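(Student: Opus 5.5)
The plan is to reduce the statement to the defining clause \eqref{eq:P1a} of a single condition in $G$, and then to move down from that condition's height to $\delta$ using Lemma~\ref{lem:localimage}.

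Fix $\gamma\in[1,\kappa^{+})$ and $\delta<\kappa$. By item 0), the set $D_{\gamma,\delta}$ is dense open in $\mathbb P_{\kappa^+}$, so by genericity we can pick $p\in G$ with $\gamma\in\supp(p)$ and $\Theta:=\hgt(p)\ge\delta$.

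The first step is to record three facts about this $p$.
\begin{itemize}
\item Since $p\in G$, we have $p\rest\gamma\in G\rest\gamma$, and $G\rest\gamma$ is $V$-generic for $\mathbb P_\gamma$ by (P0).
\item Clause \eqref{eq:P1a} for $p$, evaluated in $V[G\rest\gamma]$, gives
\[
\nm Y^p_\gamma[G\rest\gamma]\eqs f^p_\Theta[X_\gamma\cap\Theta].
\]
\item As noted just before the lemma, $Y_\gamma\cap\Theta=\nm Y^p_\gamma[G\rest\gamma]$.
\end{itemize}
Combining the last two, $Y_\gamma\cap\Theta\eqs f^p_\Theta[X_\gamma\cap\Theta]$.

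The second step identifies the tower. For every $\delta'\le\Theta$ we have $f^p_{\delta'}=f^G_{\delta'}$. This holds because any two conditions in $G$ are compatible, and an extension end-extends the tower part by (P2). Hence all $f^q_{\delta'}$ with $q\in G$ and $\hgt(q)\ge\delta'$ coincide, so the union defining $f^G_{\delta'}$ is the single involution $f^p_{\delta'}$. In particular, $\langle f^G_\delta:\delta<\kappa\rangle$ is a sequence of involutions with $f^G_\delta\rest\delta'\eqs f^G_{\delta'}$ for $\delta'<\delta$, by the coherence clause of (P1) for a single condition of large enough height.

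The third step is the descent. Apply Lemma~\ref{lem:localimage} in $V[G]$ to the tower $\langle f^G_\delta\rangle$ with the pair $\delta\le\Theta$, taking $x=X_\gamma$ and $y=Y_\gamma$. This yields $Y_\gamma\cap\delta\eqs f^G_\delta[X_\gamma\cap\delta]$, which is the claim.

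I expect the only delicate point to be the second step: making sure that $f^G_\delta$ is a well-defined function equal to $f^p_\delta$. This rests on compatibility within $G$ together with the end-extension clause of (P2). Everything else is bookkeeping.
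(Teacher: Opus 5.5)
Your proposal is correct and follows the paper's proof: you pick $p\in G$ with $\gamma\in\supp(p)$ and $\hgt(p)\ge\delta$ via item 0), read off $Y_\gamma\cap\hgt(p)\eqs f^G_{\hgt(p)}[X_\gamma\cap\hgt(p)]$ from \eqref{eq:P1a}, and descend to $\delta$ with Lemma~\ref{lem:localimage}. The only difference is that you spell out why $f^p_{\delta'}=f^G_{\delta'}$ (compatibility within $G$ plus the end-extension clause of (P2)), which the paper leaves implicit.
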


\begin{proof}
Pick $p\in G$ with $\gamma\in\supp(p)$ and $\Theta:=\hgt(p)\ge\delta$
(item 0)). The internal clause (P1) of $p$, interpreted in $G$, gives
$Y_\gamma\cap\Theta\eqs f^{G}_{\Theta}[X_\gamma\cap\Theta]$. Now apply
Lemma~\ref{lem:localimage}.
\end{proof}

\begin{proof}[Proof of \textup{4)}, well-definedness]
Suppose $p\in G$ and $p\force\nm X_\alpha\eqs\nm X_{\alpha'}$
($\alpha<\alpha'<\kappa^{+}$). By item 0) and by extending $p$ inside $G$
we may assume $\alpha,\alpha'\in\supp(p)$ and, by possibly further
extending,
\[
p\force\ \nm X_\alpha\,\triangle\,\nm X_{\alpha'}\subseteq\hgt(p);
\quad\text{therefore}\quad
p\force\ \nm X_\alpha\setminus\hgt(p)=\nm X_{\alpha'}\setminus\hgt(p).
\]
We claim that $Y_\alpha\eqs Y_{\alpha'}$ in $V[G]$; indeed
$Y_\alpha\setminus\hgt(p)=Y_{\alpha'}\setminus\hgt(p)$. To see the latter
equation it is clearly enough to argue that
\begin{equation}\label{eq:wd}
\bigl(Y_\alpha\setminus\hgt(p)\bigr)\cap\Theta
=\bigl(Y_{\alpha'}\setminus\hgt(p)\bigr)\cap\Theta
\qquad\text{for every }\Theta<\kappa.
\end{equation}
Fix $\Theta<\kappa$, and fix $q\in G$ with $q\le p$ and
$\hgt(q)\ge\Theta$ (item 0) and genericity). Let
$\Theta_0<\dots<\Theta_n$ witness $q\le p$, with blocks
$B_i=[\Theta_i,\Theta_{i+1})$ and levels $f_i=f^{q}_{\Theta_{i+1}}$.
Since $B_i\cap\hgt(p)=\emptyset$, clause (P2)(b), applied at $\alpha$ and
at $\alpha'$ \emph{with the same blocks and the same levels}, gives that
$q$ forces, for every $i<n$,
\[
\nm Y^{q}_\alpha\cap B_i
= f_i\bigl[\nm X_\alpha\cap B_i\bigr]
= f_i\bigl[\nm X_{\alpha'}\cap B_i\bigr]
= \nm Y^{q}_{\alpha'}\cap B_i .
\]
As the blocks partition $[\hgt(p),\hgt(q))$, evaluating in $G$ and using
$Y_\alpha\cap\hgt(q)=\nm Y^{q}_\alpha[G\rest\alpha]$ (the end-extension
remark above) and likewise at $\alpha'$:
\[
\bigl(Y_\alpha\setminus\hgt(p)\bigr)\cap\Theta
=\bigl(\nm Y^{q}_\alpha[G\rest\alpha]\setminus\hgt(p)\bigr)\cap\Theta
=\bigl(\nm Y^{q}_{\alpha'}[G\rest\alpha']\setminus\hgt(p)\bigr)\cap\Theta
=\bigl(Y_{\alpha'}\setminus\hgt(p)\bigr)\cap\Theta ,
\]
which is \eqref{eq:wd}. Below $\hgt(p)$, the internal clauses (P1),
evaluated in $G$, give
\[
Y_\alpha\cap\hgt(p)\ \eqs\ f^{G}_{\hgt(p)}\bigl[X_\alpha\cap\hgt(p)\bigr]
\ \eqs\ f^{G}_{\hgt(p)}\bigl[X_{\alpha'}\cap\hgt(p)\bigr]
\ \eqs\ Y_{\alpha'}\cap\hgt(p),
\]
the middle $\eqs$ since $X_\alpha\cap\hgt(p)\eqs X_{\alpha'}\cap\hgt(p)$
and images of $\eqs$-equal sets under an injection are $\eqs$. Hence
$Y_\alpha\eqs Y_{\alpha'}$, as claimed; and as this holds whenever
$p$ belongs to the generic filter, $p$ forces it.
\end{proof}

\begin{proof}[Proof of \textup{4)}, Boolean structure]
Let $X=X_\alpha$, $X'=X_{\alpha'}$ in $V[G]$ and let $\beta,\beta'$ be
bookkeeping coordinates with $X_\beta=X\cup X'$ and $X_{\beta'}=X\cap X'$.
Fix $p\in G$ with $\alpha,\alpha',\beta,\beta'\in\supp(p)$,
$\Theta=\hgt(p)$, and (as in the previous proof, after extending)
\[
p\force\ \nm X_\beta\setminus\Theta
=(\nm X_\alpha\cup\nm X_{\alpha'})\setminus\Theta,
\qquad
p\force\ \nm X_{\beta'}\setminus\Theta
=(\nm X_\alpha\cap\nm X_{\alpha'})\setminus\Theta .
\]
For $q\in G$ with $q\le p$ and a witness block $B$ (above $\Theta$) with
top level $f$, the above identities and (P2)(b) at $\beta$, $\alpha$,
$\alpha'$ give that $q$ forces
\[
\nm Y^{q}_\beta\cap B=f\bigl[(\nm X_\alpha\cup\nm X_{\alpha'})\cap B\bigr]
=f[\nm X_\alpha\cap B]\cup f[\nm X_{\alpha'}\cap B]
=(\nm Y^{q}_\alpha\cup\nm Y^{q}_{\alpha'})\cap B .
\]
Exactly as in the proof of well-definedness (running over all
$\Theta'<\kappa$ and all $q\in G$ of height $\ge\Theta'$), this yields
$(Y_\alpha\cup Y_{\alpha'})\setminus\Theta=Y_\beta\setminus\Theta$ in
$V[G]$. On the other hand, below $\Theta$ the internal clauses, together
with the fact that
$q$ is a condition, give agreement mod finite. Hence
$\FG([X\cup X'])=\FG([X])\vee\FG([X'])$; the case of $\wedge$ is
identical; and for complements one uses additionally that
by (P2)(a) the block top $f$ restricts to an involution \emph{of} $B$, so
\[
f\bigl[B\setminus(\nm X_\alpha\cap B)\bigr]
=B\setminus f\bigl[\nm X_\alpha\cap B\bigr].
\]
Also $\FG([\emptyset])=[\emptyset]$ and $\FG([\kappa])=[\kappa]$ by
Lemma~\ref{lem:coherence}. So $\FG$ is a homomorphism.
\end{proof}

\begin{proof}[Proof of \textup{4)}, bijectivity]
We claim $\FG$ is an involution: $\FG\circ\FG=\mathrm{id}$, which gives
bijectivity at once. By the bookkeeping, for each $\alpha$ there are
$p\in G$ and $\beta$ with $\alpha,\beta\in\supp(p)$ and
$p\force\nm Y_\alpha=\nm X_\beta$ (seal the name $\nm Y_\alpha$). It is
enough to show that then $p\force\nm Y_\beta\eqs\nm X_\alpha$.

First consider the region above $\Theta:=\hgt(p)$. Let $q\in G$ satisfy
$q\le p$, and let $B$ be a block of a witness for $q\le p$, with top
level $f$. Using (P2)(b) first at $\beta$ and then at $\alpha$, together
with $X_\beta=Y_\alpha$ and $f\circ f=\mathrm{id}$ on $B$ by (P2)(a), we
obtain in $V[G]$
\[
Y_\beta\cap B=f[X_\beta\cap B]=f[Y_\alpha\cap B]
=f\bigl[f[X_\alpha\cap B]\bigr]=X_\alpha\cap B .
\]
As such conditions $q\in G$ have cofinal heights, this proves
$Y_\beta\setminus\Theta=X_\alpha\setminus\Theta$. On the other hand,
below $\Theta$ the internal clauses, $f^{G}_\Theta\circ f^{G}_\Theta=
\mathrm{id}$, and the fact that images of $\eqs$-equal sets under an
injection are $\eqs$, give
\[
Y_\beta\cap\Theta
\ \eqs\ f^{G}_{\Theta}\bigl[X_\beta\cap\Theta\bigr]
= f^{G}_{\Theta}\bigl[Y_\alpha\cap\Theta\bigr]
\ \eqs\ f^{G}_{\Theta}\bigl[f^{G}_{\Theta}[X_\alpha\cap\Theta]\bigr]
= X_\alpha\cap\Theta ,
\]
using $X_\beta=Y_\alpha$ for the first equality and the internal clause at
$\alpha$ for the second $\eqs$. Hence $\FG([\,Y_\alpha\,])=[X_\alpha]$.
\end{proof}

\begin{proof}[Proof of \textup{4)}, almost triviality]
Fix $\beta<\kappa$ and $H\subseteq\beta$ in $V[G]$, say $H=X_\gamma$. By
Lemma~\ref{lem:coherence}, for every $\delta\in[\beta,\kappa)$:
$Y_\gamma\cap\delta\eqs f^{G}_\delta[H]$, which equals $f^{G}_\beta[H]$
mod finite: $f^{G}_\delta\rest\beta\eqs f^{G}_\beta$ (coherence of the
generic tower) and $H\subseteq\beta$, so
$f^{G}_\delta[H]\eqs f^{G}_\beta[H]$. Thus $Y_\gamma\,\triangle\,f^{G}_\beta[H]$
has finite intersection with every $\delta<\kappa$, hence, $Y_\gamma\eqs f^{G}_\beta[H]$. Hence
$\FG\rest\Pw(\beta)/\Fin$ is induced by the involution $f^{G}_\beta$.
\end{proof}

\subsection{Nontriviality: item 5)}\label{sec:nontrivial}

Suppose some $p_0\in\mathbb P_{\kappa^{+}}$ forces
$\nm X\in[\kappa]^{\kappa}$ and $\nm E\colon\nm X\to\kappa$; it suffices, by
density, to find an extension of $p_0$ and a $\delta<\kappa$ such that the
extension forces $\nm E\rest(\nm X\cap\delta)\not\eqs
f^{G}_\delta\rest(\nm X\cap\delta)$.

\subsubsection{The fusion}
Write $\Theta_n:=\hgt(p_n)$ and $\lambda_n:=|\supp(p_n)|$, and
$\mu_n:=2^{\lambda_n}$; note $\mu_n^{+}<\kappa$ since $\kappa$ is
inaccessible. We build a decreasing sequence
$p_0\ge p_1\ge p_2\ge\cdots$ ($n<\omega$), as follows.

Given $p_n$, first pick, by ${<}\kappa$-distributivity, an
extension $q\le p_n$ with $q\in\mathbb P_{\kappa^{+}}$ with $\Theta_n=\tilde\Theta_0<\dots<\tilde\Theta_k=\hgt(q)$ witnessing
$q\le p_n$, and letting
\[
t:=\bigcup_{j<k}
f^{q}_{\tilde\Theta_{j+1}}\rest[\tilde\Theta_j,\tilde\Theta_{j+1}),
\]
so that $q$ satisfies the following: 
$q$ decides a
set $A_n\subseteq\nm X\cap[\Theta_n,\hgt(q))$ with
$|A_n|=\mu_n^{+}$ as well as for every $a\in A_n\cup t[A_n]$ and
every $\sigma\in\supp(p_n)$, whether $a\in\nm X_\sigma$ (possible as $\nm X$ is forced to have size $\kappa$),
together with the values $X_n=\nm X\cap\Theta_n$ and $E_n=\nm E\rest(\nm X\cap\Theta_n)$. Such $q$ can be found by extending $p_n$ twice.

Now apply Lemma~\ref{lem:coarsening} to $q\le p_n$ with
$s:=f^{q}_{\Theta_n}\cup t$ and any
$\Theta''>\hgt(q)$: we obtain $r\le q$ with $r\in\mathbb
P_{\kappa^{+}}$ such that $r\le p_n$ is witnessed by the single pair
$\Theta_n<\hgt(r)$ and $f^{r}_{\hgt(r)}\rest\hgt(q)=s$. Let
$p_{n+1}:=r$, so $\Theta_{n+1}=\hgt(r)$; by (P2)(a) the level
$f^{p_{n+1}}_{\Theta_{n+1}}$ restricts to an involution of
$[\Theta_n,\Theta_{n+1})$, and
\begin{equation}\label{eq:topfrozen}
	f^{p_{n+1}}_{\Theta_{n+1}}\rest\Theta_n=f^{p_n}_{\Theta_n},
	\qquad
	f^{p_{n+1}}_{\Theta_{n+1}}\rest[\Theta_n,\hgt(q))=t .
\end{equation}

The construction has the following properties:
\begin{enumerate}[label=(f\arabic*),start=0]
\item for every $n$, $p_n\in\mathbb P_{\kappa^+}$ and
$p_{n+1}\le p_n$ is witnessed by the single pair
$\Theta_n<\Theta_{n+1}$;
\item for every $n$, $p_{n+1}$ decides a set
$A_n\subseteq\nm X\cap[\Theta_n,\Theta_{n+1})$ with
$|A_n|=\mu_n^{+}$;
\item for every $n$, $p_{n+1}$ decides, for every $a\in A_n\cup
f^{p_{n+1}}_{\Theta_{n+1}}[A_n]$ and every $\sigma\in\supp(p_n)$, whether
$a\in\nm X_\sigma$ (in particular, such a statement is decided by $p_{n+1}\restriction \sigma$);
\item for every $n$, $p_{n+1}$ decides $\nm X\cap\Theta_n$ to be $X_n$ and
$\nm E\rest(\nm X\cap\Theta_n)$ to be $E_n$;
\item for all $n<m<\omega$, $(\bigstar_{n,m})$ holds.
\end{enumerate}

Indeed, (f4) follows from (f0) and \eqref{eq:topfrozen}.

Let $\Theta_\omega:=\sup_n\Theta_n$, and let $p^{*}$ be the
$\bigstar$-limit of $\vec{p}=\langle p_n:n<\omega\rangle$ given by
Claim~\ref{claim:fusion}(C1). Its top level is
\[
f^{*}:=f^{p^{*}}_{\Theta_\omega}
=f^{p_0}_{\Theta_0}\cup\bigcup_{n<\omega}
f^{p_{n+1}}_{\Theta_{n+1}}\rest[\Theta_n,\Theta_{n+1});
\]
in particular, if $I_n:=[\Theta_n,\Theta_{n+1})$, then
$f^{*}\rest I_n=f^{p_{n+1}}_{\Theta_{n+1}}\rest I_n$ for every $n$, and
$p^{*}\le p_n$ is witnessed by the single pair
$\Theta_n<\Theta_\omega$. Since $p^*$ is a lower bound of $\vec{p}$, it
forces $\nm X\cap\Theta_\omega=\bigcup_{n\in \omega} X_n =: X_\omega$ and
$\nm E\rest(\nm X\cap\Theta_\omega)=\bigcup_{n\in\omega} E_n=:g_0$.

\subsubsection{The surgery}\label{subsub: surgery}
Fix $n$. Write $f:=f^{p_{n+1}}_{\Theta_{n+1}}\rest I_n$,
an involution of the interval $I_n$ by (P2)(a). To each
$a\in A_n$ associate its \emph{pattern}: the function
\[
\operatorname{pt}(a):\ \sigma\in\supp(p_n)\ \longmapsto\
\bigl(\,[a\in \nm X_\sigma],\ [f(a)\in \nm X_\sigma]\,\bigr)\in2\times2,
\]
where $[a\in\nm X_\sigma],[f(a)\in\nm X_\sigma]\in\{0,1\}$ denote the
truth values decided by $p_{n+1}$ as in (f2). There are at most $4^{\lambda_n}\le\mu_n$
patterns, so there is $C_n\subseteq A_n$ with $|C_n|=\mu_n^{+}$ and
$\operatorname{pt}$ constant on $C_n$. We now define an involution
$f'=f'_n$ of $I_n$ and a
\emph{disagreement point} $i_n\in C_n$. Since $\mu_n^{+}$ is regular,
at least one of the following two cases occurs.

\emph{Case A: $|\{a\in C_n:f(a)=a\}|=\mu_n^{+}$.} Choose $i\ne i' \in C_n$ with
$f(i)=i$, $f(i')=i'$, and set $i_n:=i$. If
$g_0(i)\ne i$, set $f':=f$. If $g_0(i)=i$, set
\[
f'(x):=\begin{cases}
i'&x=i,\\
i&x=i',\\
f(x)&x\in I_n\setminus\{i,i'\}.
\end{cases}
\]

\emph{Case B: $|\{a\in C_n:f(a)\ne a\}|=\mu_n^{+}$.} Among the pairs
$\{a,f(a)\}$ ($a\in C_n$, $f(a)\ne a$) there are $\mu_n^{+}$ many
pairwise disjoint ones. Choose two of
them, $\{i,f(i)\}$ and $\{i',f(i')\}$, with $i,i'\in C_n$ and the four
points $i,f(i),i',f(i')$ pairwise distinct, and set $i_n:=i$. If
$g_0(i)\ne f(i)$, set $f':=f$. If $g_0(i)=f(i)$, set
\[
f'(x):=\begin{cases}
f(i')&x=i,\\
i&x=f(i'),\\
f(i)&x=i',\\
i'&x=f(i),\\
f(x)&x\in I_n\setminus\{i,f(i),i',f(i')\}.
\end{cases}
\]

In all cases, write $Q_n = \{j\in I_n: f(j)\neq f'(j)\}$. Note that
\begin{equation}\label{eq:disagree}
f'(i_n)\ne g_0(i_n):
\end{equation}
if $f'=f$, this is the case hypothesis (in Case A, $f(i)=i$); in Case A
with $g_0(i)=i$ we have $f'(i)=i'\ne i$; in Case B with $g_0(i)=f(i)$ we
have $f'(i)=f(i')\ne f(i)$, since $f$ is injective and $i\ne i'$.

\begin{lemma}\label{lem:surgery}
$f'$ is an involution
of $I_n$, and for every $\sigma\in\supp(p_n)$, $p_{n+1}$ forces:
\begin{equation}\label{eq:surgimage}
f'\bigl[\nm X_\sigma\cap I_n\bigr]=f\bigl[\nm X_\sigma\cap I_n\bigr].
\end{equation}
\end{lemma}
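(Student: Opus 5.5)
The plan is to prove both assertions by a direct pointwise comparison of $f'$ with $f$. The only points where they differ form the finite set $Q_n\subseteq\{i,i',f(i),f(i')\}$, and all of these points have the same pattern.

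For the involution claim, argue case by case. If $f'=f$, then $f'$ is an involution of $I_n$ by (P2)(a). In Case A with $g_0(i)=i$, the points $i,i'$ are distinct fixed points of $f$. The map $f'$ is $f$ composed with the transposition $(i\,i')$ on the $f$-invariant set $\{i,i'\}$, and it agrees with $f$ on the $f$-invariant complement. So $f'$ swaps $i$ and $i'$ and is an involution. In Case B, the set $\{i,f(i),i',f(i')\}$ consists of four distinct points and is $f$-invariant, and $f$ acts on it as the matching $\{i,f(i)\},\{i',f(i')\}$. The map $f'$ instead uses the matching $\{i,f(i')\},\{i',f(i)\}$, which is again a fixed-point-free involution of that four-point set. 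Since $f'=f$ off this set, $f'$ is an involution of $I_n$.

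For \eqref{eq:surgimage}, fix $\sigma\in\supp(p_n)$ and work below $p_{n+1}$. Write $Z=\nm X_\sigma\cap I_n$. Because $f$ and $f'$ agree off the finite invariant set $S$ (either $\emptyset$, $\{i,i'\}$, or the four points), it suffices to show that $f'[Z\cap S]=f[Z\cap S]$. By (f2), $p_{n+1}$ decides membership in $\nm X_\sigma$ for every point of $A_n\cup f[A_n]$. Every point of $S$ lies in $C_n\cup f[C_n]\subseteq A_n\cup f[A_n]$, so $Z\cap S$ is a decided finite set.

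Since $\operatorname{pt}(i)=\operatorname{pt}(i')$, we have $[i\in X_\sigma]=[i'\in X_\sigma]=:\epsilon$ and $[f(i)\in X_\sigma]=[f(i')\in X_\sigma]=:\epsilon'$.
\begin{itemize}
\item In Case A, $S=\{i,i'\}$ and $f$ fixes both points. Then $Z\cap S$ is either $\emptyset$ or $S$, and $f'$ maps $S$ onto $S$. Hence $f'[Z\cap S]=Z\cap S=f[Z\cap S]$.
\item In Case B, $Z\cap S=(\{i,i'\}$ if $\epsilon=1)\cup(\{f(i),f(i')\}$ if $\epsilon'=1)$. Now $f'$ maps $\{i,i'\}$ onto $\{f(i'),f(i)\}$ and maps $\{f(i),f(i')\}$ onto $\{i',i\}$, and $f$ does exactly the same at the level of these two-element sets. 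So the images coincide.
\end{itemize}

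Since $p_{n+1}$ decides all relevant memberships (in fact $p_{n+1}\rest\sigma$ does), these equalities are forced, which gives \eqref{eq:surgimage}.

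The main obstacle is the bookkeeping in Case B, namely checking that $f(i),f(i')$ are covered by the decisions in (f2). This holds because $f=f^{p_{n+1}}_{\Theta_{n+1}}\rest I_n$ and the points lie in $f^{p_{n+1}}_{\Theta_{n+1}}[A_n]$. The remaining step is to note that pattern equality controls the membership of both $a$ and $f(a)$, and this is exactly why patterns record pairs.
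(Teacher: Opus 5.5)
Your proposal is correct and follows essentially the same route as the paper: reduce to the finite set where $f$ and $f'$ differ, use $\operatorname{pt}(i)=\operatorname{pt}(i')$ to get matching membership values, and verify the image equality separately in Case A and Case B. The differences are cosmetic. You spell out the involution check, which the paper states without detail. In Case B you observe that both $f$ and $f'$ map $\{i,i'\}$ onto $\{f(i),f(i')\}$ and back, whereas the paper enumerates the four possible values of $(\varepsilon_1,\varepsilon_2)$.
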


\begin{proof}
If $f'=f$ there is nothing to prove, so assume the modification was
performed. By definition, $f'$ is an involution. 

For \eqref{eq:surgimage}, work below $p_{n+1}$ and write
$S:=\nm X_\sigma\cap I_n$. Since $Q_n$ is both $f$- and $f'$-invariant
and $f'\rest(I_n\setminus Q_n)=f\rest(I_n\setminus Q_n)$, we have
\[
f[S]=f[S\setminus Q_n]\,\dot\cup\,f[S\cap Q_n],\qquad
f'[S]=f[S\setminus Q_n]\,\dot\cup\,f'[S\cap Q_n],
\]
so it suffices to show
$f'[S\cap Q_n]=f[S\cap Q_n]$.

\emph{Case A.} Here $Q_n=\{i,i'\}$ with $f(i)=i$, $f(i')=i'$. Since
$\operatorname{pt}(i)=\operatorname{pt}(i')$ and for an $f$-fixed point
the two coordinates of the pattern coincide, we have
$[i\in\nm X_\sigma]=[i'\in\nm X_\sigma]=:\varepsilon$. If
$\varepsilon=0$: $S\cap Q_n=\emptyset$ and both images are $\emptyset$.
If $\varepsilon=1$: $S\cap Q_n=\{i,i'\}$,
$f[\{i,i'\}]=\{i,i'\}$ and $f'[\{i,i'\}]=\{f'(i),f'(i')\}=\{i',i\}$;
equal.

\emph{Case B.} Here $Q_n=\{i,f(i),i',f(i')\}$, four distinct points.
Put
\[
\varepsilon_1:=[i\in \nm X_\sigma]=[i'\in \nm X_\sigma],\qquad
\varepsilon_2:=[f(i)\in \nm X_\sigma]=[f(i')\in \nm X_\sigma],
\]
the two matches holding because $\operatorname{pt}(i)=
\operatorname{pt}(i')$. Then
$S\cap Q_n=\{i,i'\mid\varepsilon_1=1\}\cup
\{f(i),f(i')\mid\varepsilon_2=1\}$, and we check the four cases:
\begin{itemize}
\item $(\varepsilon_1,\varepsilon_2)=(0,0)$: $S\cap Q_n=\emptyset$;
both images are $\emptyset$.
\item $(\varepsilon_1,\varepsilon_2)=(1,0)$: $S\cap Q_n=\{i,i'\}$;
the two images are equal, since
\[
f[\{i,i'\}]=\{f(i),f(i')\}
=\{f(i'),f(i)\}=f'[\{i,i'\}].
\]
\item $(\varepsilon_1,\varepsilon_2)=(0,1)$:
$S\cap Q_n=\{f(i),f(i')\}$;
$f[\{f(i),f(i')\}]=\{i,i'\}$ (as $f$ is an involution) and
$f'[\{f(i),f(i')\}]=\{f'(f(i)),f'(f(i'))\}=\{i',i\}$.
\item $(\varepsilon_1,\varepsilon_2)=(1,1)$: $S\cap Q_n=Q_n$;
$f[Q_n]=Q_n=f'[Q_n]$.
\end{itemize}
In each case $f'[S\cap Q_n]=f[S\cap Q_n]$, proving
\eqref{eq:surgimage}.
\end{proof}

\subsubsection{The lower bound}
We slightly abuse notation in what follows: for $\sigma\in\supp(p^*)$ and $a\in\bigcup_{n<\omega}\bigl(A_n\cup f^{*}[A_n]\bigr)$, we write $[a\in\nm X_\sigma]$ for the truth value decided by $p^*$. 
For each $n<\omega$ let $f'_n$ denote the involution of $I_n$ produced
by the surgery of Section \ref{subsub: surgery}, and let
$Q_n\subseteq I_n$ be $\{j\in I_n: f'_n(j)\neq f^*(j)\}$. Define
\[
f^{**}\colon\Theta_\omega\longrightarrow\Theta_\omega,\qquad
f^{**}:=f^{p_0}_{\Theta_0}\cup\bigcup_{n<\omega}f'_n.
\]
Let $p^{**}$ be the sequence of length $\kappa^+$ with
$\hgt(p^{**})=\Theta_\omega$, with $(p^{**})_0$ equal to
\[
\bigl\langle f^{p^{**}}_{\delta}:\delta\le\Theta_\omega\bigr\rangle
:=\bigl\langle f^{p^{*}}_{\delta}:\delta<\Theta_\omega\bigr\rangle
{}^\frown\bigl\langle f^{**}\bigr\rangle,
\]
$\supp(p^{**}):=\supp(p^{*})$, and $(p^{**})_\sigma:=(p^{*})_\sigma$ for
every $\sigma\in\supp(p^{*})$: $p^{**}$ is $p^{*}$ with its top level
replaced by $f^{**}$, all lower tower levels and all coordinate names
unchanged. Note that, since each surgery acts inside its own interval,
\begin{equation}\label{eq:surgdiff}
\{x<\delta: f^{**}(x)\ne f^{*}(x)\}\ \subseteq\
\bigcup_{m<n}Q_m
\qquad\text{for every }\delta\le\Theta_n,
\end{equation}
a finite set.

We claim that \emph{$p^{**}\in \mathbb{P}_{\kappa^+}$.} The tower part: $f^{**}$ is an involution
of $\Theta_\omega$, being the union of the involution
$f^{p_0}_{\Theta_0}$ of $\Theta_0$ and the involutions $f'_n$ of the
pairwise disjoint intervals $I_n$. Coherence
(P1): for $\delta<\Theta_\omega$, say $\delta\le\Theta_n$, we have
$f^{**}\rest\delta\eqs f^{*}\rest\delta$ by \eqref{eq:surgdiff}, and
$f^{*}\rest\delta\eqs f^{p^{*}}_{\delta}$ since $p^{*}$ is a condition;
hence $f^{**}\rest\delta\eqs f^{p^{*}}_{\delta}=f^{p^{**}}_{\delta}$. In
particular $p^{**}\restriction 1$ is a condition in $\mathbb{P}_1$;
moreover $p^{**}\restriction 1\le p_n\restriction 1$ for every $n$,
witnessed by the single pair $\Theta_n<\Theta_\omega$, since the only
relevant clause (P2)(a) concerns the tower alone: $f^{**}$ maps
$[\Theta_n,\Theta_\omega)$ onto itself because $f'_m[I_m]=I_m$ for all
$m<\omega$. Let us show, by a simultaneous induction on
$\beta\le\kappa^{+}$, that
\[
p^{**}\restriction \beta\in \mathbb{P}_\beta
\qquad\text{and}\qquad
p^{**}\restriction\beta\le_{\mathbb P_\beta}p_n\restriction\beta\ \text{
for every }n<\omega,
\]
the latter witnessed by the single pair $\Theta_n<\Theta_\omega$. If
$\beta$ is a limit, both statements follow from the induction hypothesis
(the same witness works at every coordinate).

Successor case $\beta=\sigma+1$. If $\sigma\notin\supp(p^{**})$, there is
nothing new to check. Suppose $\sigma\in\supp(p^{**})$ is activated at stage $m$ (i.e.\
$\sigma\in\supp(p_n)$ for all $n\ge m$). We first check the internal clause \eqref{eq:P1a} at $\sigma$:
\[
f^{**}\bigl[\nm X_\sigma \cap \Theta_\omega\bigr]
=f^{**}\bigl[\nm X_\sigma \cap \Theta_m\bigr]\ \cup\
\bigcup_{n\ge m}f'_n\bigl[\nm X_\sigma\cap I_n\bigr],
\]
and $p^{**}\restriction \sigma$
forces $f'_n[\nm X_\sigma\cap I_n]=f^{*}[\nm X_\sigma\cap I_n]$ for
each $n\ge m$ --- by Lemma~\ref{lem:surgery} this equation is forced by
$p_{n+1}$, hence by
$p^{**}\restriction\sigma\le p_{n+1}\restriction\sigma$, which is the
induction hypothesis at $\sigma$ --- while
$f^{**}[\nm X_\sigma\cap\Theta_m]$ differs from
$f^{*}[\nm X_\sigma\cap\Theta_m]$ inside the finite set
$\bigcup_{k<m}Q_k$ only, by \eqref{eq:surgdiff}. Hence
$f^{**}[\nm X_\sigma\cap\Theta_\omega]\eqs
f^{*}[\nm X_\sigma\cap\Theta_\omega]\eqs\nm Y^{p^{*}}_\sigma
=\nm Y^{p^{**}}_\sigma$ is forced by $p^{**}\restriction\sigma$, so
$p^{**}\restriction(\sigma+1)\in\mathbb P_{\sigma+1}$.

Next, the order at $\beta=\sigma+1$: fix $n$ with $\sigma\in\supp(p_n)$
(for $\sigma\notin\supp(p_n)$ the clauses (P2)(b),(c) at $\sigma$ are
vacuous). To verify clause (b), for every $m\ge n$:
\[
f^{**}\bigl[\nm X_\sigma\cap I_m\bigr]
\overset{\eqref{eq:surgimage}}{=}
f^{*}\bigl[\nm X_\sigma\cap I_m\bigr]
=f^{p_{m+1}}_{\Theta_{m+1}}\bigl[\nm X_\sigma\cap I_m\bigr]
\overset{\text{(P2)(b)}}{=}\nm Y^{p^{**}}_\sigma\cap I_m ,
\]
where Lemma~\ref{lem:surgery} applies because
$\sigma\in\supp(p_n)\subseteq\supp(p_m)$, and its equation is forced by
$p^{**}\restriction\sigma$ as above. Here we use that
$p^{**}\restriction\sigma\le p_{m+1}\restriction\sigma$, witnessed by
$\Theta_{m+1}<\Theta_\omega$. Taking the union over $m\ge n$
gives clause (b). Clause (c) is inherited from $p^{*}$. This completes
the induction; in particular $p^{**}\in\mathbb P_{\kappa^{+}}$ and
\emph{$p^{**}\le p_n$ for every $n$}, witnessed by the single pair
$\Theta_n<\Theta_\omega$.

\emph{Conclusion.} $p^{**}$ forces: $f^{G}_{\Theta_\omega}=f^{**}$;
$\nm E\rest(\nm X\cap\Theta_\omega)=g_0$; and for every $n\in \omega$ the recorded
disagreement point $i_n\in A_n\subseteq\nm X\cap\Theta_\omega$ satisfies
$f^{**}(i_n)\ne g_0(i_n)$ by \eqref{eq:disagree}. As the $i_n$ are
infinitely many,
\[
p^{**}\force\ \nm E\rest(\nm X\cap\Theta_\omega)\not\eqs
f^{G}_{\Theta_\omega}\rest(\nm X\cap\Theta_\omega),
\]
as desired. \qed

\begin{proof}[Proof of Theorem \ref{theorem: main}]
Choose the names $\langle\nm X_\gamma:1\le\gamma<\kappa^+\rangle$
during the recursion according to the bookkeeping supplied by
Corollary~\ref{cor:preservation}, so that every nice name for a subset of
$\kappa$ occurs cofinally often. Since $\mathbb P_{\kappa^+}$ is
$\kappa^+$-cc, every $\mathbb P_{\kappa^+}$-name for a subset of $\kappa$
is already a $\mathbb P_\delta$-name for some $\delta<\kappa^+$ and hence
appears later in this bookkeeping.

Cofinalities and cardinals are preserved by items 1) and 3) of
Lemma~\ref{lem:mainlemma} (Corollary~\ref{cor:preservation}), and
$\kappa$ stays inaccessible since no bounded subsets of $\kappa$ are
added. Item 4) gives the automorphism $\FG$ with
$\FG\circ\FG=\mathrm{id}$; it is trivial on $\Pw(\beta)/\Fin$ for every
$\beta<\kappa$, witnessed by the involution $f^{G}_\beta$, hence trivial
on $\Pw(X)/\Fin$ for every $X\in[\kappa]^{<\kappa}$ (as $X\subseteq\beta$
for some $\beta<\kappa$).

It remains to show that $\FG$ is not trivial; we show the stronger
statement that for \emph{no} $X\in[\kappa]^{\kappa}$ and
$g\colon X\to\kappa$ does $g$ induce $\FG$ on $\Pw(X)/\Fin$. Suppose in
$V[G]$ some such $X$ and $g$ did, i.e.\
$\FG([H])=[g[H]]$ for all $H\subseteq X$. Since $g$ induces an injective
homomorphism of Boolean algebras, after a finite modification we may take
$g$ to be injective. Fix $\delta<\kappa$. By almost
triviality, $\FG([H])=[f^{G}_\delta[H]]$ for $H\subseteq X\cap\delta$; so
the injections $g\rest(X\cap\delta)$ and $f^{G}_\delta\rest(X\cap\delta)$
induce the same map on $\Pw(X\cap\delta)/\Fin$, whence, by
Lemma~\ref{lem:agreement} when $X\cap\delta$ is infinite,
$g\rest(X\cap\delta)\eqs f^{G}_\delta\rest(X\cap\delta)$ --- for
\emph{every} $\delta<\kappa$. This contradicts item 5) applied to
$X$ and $E:=g$. Hence $\FG$ is a non-trivially non-trivial
automorphism of $\Pw(\kappa)/\Fin$. 

For $\lambda=\omega$, take $\Phi_\omega:=\FG$. Fix an infinite
$\lambda\le\kappa$ with $\lambda>\omega$. For $X\subseteq\kappa$, let
$Y\subseteq\kappa$ represent $\FG([X]_\Fin)$. By
Lemma~\ref{lem:coherence}, $Y$ \emph{threads}
\[
\bigl\langle f^G_\alpha[X\cap\alpha]:\alpha<\kappa\bigr\rangle,
\qquad\text{that is,}\qquad
Y\cap\alpha\eqs f^G_\alpha[X\cap\alpha]
\quad(\alpha<\kappa).
\]
Define the induced map by
\[
\Phi_\lambda([X]_{I_\lambda}):=[Y]_{I_\lambda}.
\]
To see that this is well defined, suppose $|X\mathbin\triangle X'|<\lambda$
and let $Y,Y'$ thread the corresponding sequences. If
$|Y\mathbin\triangle Y'|\ge\lambda$, put
$\mu:=|X\mathbin\triangle X'|$. There is $\beta<\kappa$ such that
\[
|(Y\mathbin\triangle Y')\cap\beta|>\mu+\aleph_0.
\]
But $Y\cap\beta\eqs f^G_\beta[X\cap\beta]$ and
$Y'\cap\beta\eqs f^G_\beta[X'\cap\beta]$, while $f^G_\beta$ is a
bijection of $\beta$, so
\[
|(Y\mathbin\triangle Y')\cap\beta|
\le |(X\mathbin\triangle X')\cap\beta|+\aleph_0
\le\mu+\aleph_0,
\]
a contradiction. Thus $\FG$ preserves $I_\lambda$ and induces
$\Phi_\lambda$. Since $\FG$ is a Boolean-algebra involution, so is
$\Phi_\lambda$.

We finally show that $\Phi_\lambda\rest\Pw(X)/I_\lambda$ is nontrivial
for every $X\in[\kappa]^\kappa$. Suppose otherwise, as witnessed by
$g\colon X\to\kappa$; again, we may take $g$ to be injective. By the
density arguments in
Section~\ref{sec:nontrivial}, we can find a discrete
$Y\in[\kappa\cap\operatorname{cof}(\omega)]^\kappa$ and
$\langle X_\alpha\subseteq X\cap\alpha:\alpha\in Y\rangle$ such that
	\begin{enumerate}
	\item for each $\alpha\in Y$, $|X_\alpha|=\aleph_0$;
		\item for distinct $\alpha,\beta\in Y$,
$X_\alpha\cap X_\beta=\emptyset$;
		\item for any $\alpha\in Y$ and $i\in X_\alpha$,
$g(i),f^G_\alpha(i)\in\alpha$ and $g(i)\neq f^G_\alpha(i)$;
		\item\label{it:growth} for any $\alpha<\beta$ in $Y$ and
$i\in X_\beta$,
$i,g(i),f^G_\beta(i)>\alpha$.
		\end{enumerate}
Using the injectivity of $g$, such a family is secured by running the
density argument of item 5) recursively in $V[G]$, choosing the members of
$Y$ and the corresponding sets $X_\alpha$ above all previously constructed
data.

By Lemma~\ref{lemma: separate}, for each $\alpha\in Y$ choose an infinite
$X'_\alpha\subseteq X_\alpha$ such that
$g[X'_\alpha]\cap f^G_\alpha[X'_\alpha]=\emptyset$, and put
$A:=\bigcup_{\alpha\in Y}X'_\alpha$. Let
$\Phi_\lambda([A]_{I_\lambda})=[C]_{I_\lambda}$, where $C$ threads
$\langle f^G_\alpha[A\cap\alpha]:\alpha<\kappa\rangle$. Since $|g[A]\mathbin\triangle C|<\lambda\le\kappa$, due to the
assumption that $g$ induces
$\Phi_\lambda\rest\Pw(X)/I_\lambda$, and since $\kappa$ is regular, there is some $\xi<\kappa$ such that $g[A]\cap [\xi,\kappa) = C\cap [\xi, \kappa)$.

Let $\alpha\in Y$ be such that
\[
\min\bigl(X_\alpha\cup g[X'_\alpha]\cup
f^G_\alpha[X'_\alpha]\bigr)>\xi.
\]
Now $g[X'_\alpha]\subseteq\alpha$, so
\[
g[X'_\alpha]\subseteq C\cap\alpha
\eqs f^G_\alpha[A\cap\alpha].
\]
Let $\sigma=\sup Y\cap \alpha$. Then we know $g[X'_\alpha]\subseteq [\sigma, \alpha)$. As $f^G_\alpha\restriction \sigma \equiv^* f^G_\sigma$, we know that $g[X'_\alpha]\subseteq^* f^G_\alpha [X'_\alpha]$. This contradicts
$g[X'_\alpha]\cap f^G_\alpha[X'_\alpha]=\emptyset$.
\end{proof}

To get a model of $\mathrm{MA}_{\aleph_1}$ and $\mathrm{OCA}_{\mathrm{T}}$ together with a nontrivial automorphism on $P(\kappa)/\fin$ for some cardinal $\kappa$, we can start with a model of $\mathrm{MA}_{\aleph_1}$ and $\mathrm{OCA}_{\mathrm{T}}$ and an inaccessible cardinal $\kappa$. First force with $\mathrm{Add}(\kappa^+,1)$ to arrange $2^\kappa=\kappa^+$; this forcing is $\kappa^+$-closed, so it preserves $\mathrm{MA}_{\aleph_1}$, $\mathrm{OCA}_{\mathrm{T}}$ and the inaccessibility of $\kappa$. Then we pass to a forcing extension as in Theorem \ref{theorem: main}. In the final model, $\mathrm{MA}_{\aleph_1}$ and $\mathrm{OCA}_{\mathrm{T}}$ are preserved since the forcing is $<\kappa$-distributive.

\section{Concluding remarks and open questions}\label{section: questions}

Let $\kappa$ be an inaccessible cardinal and let
$\Phi\colon\Pw(\kappa)/\Fin\to\Pw(\kappa)/\Fin$ be an automorphism.
Following Veli\v{c}kovi\'{c} \cite{Velickovic}, consider the ideal
\[
T(\Phi):=\{X\subseteq\kappa:
\Phi\rest\Pw(X)/\Fin\text{ is trivial}\}.
\]
Theorem~\ref{theorem: main} shows that it is consistent that
$T(\Phi)=[\kappa]^{<\kappa}$.

\begin{question}
Is it consistent that $T(\Phi)$ is a prime ideal on $\kappa$ that contains $[\kappa]^{<\kappa}$?
\end{question}

For $\kappa=\omega$, Veli\v{c}kovi\'{c} \cite{Velickovic} showed that it
is consistent for $T(\Phi)$ to be a nonprincipal prime ideal on $\omega$.

The following question is inspired by the fact that CH holds in all known models with a non-trivial automorphism of $\Pw(\omega_1)/\fin$.
\begin{question}
Is it consistent that all automorphisms of $\Pw(\omega)/\fin$ are trivial, yet there exists a non-trivial automorphism of $\Pw(\omega_1)/\fin$?
\end{question}

An easier test question is whether the failure of CH is consistent with the existence of a non-trivial automorphism of $\Pw(\omega_1)/\fin$.

Constructions principles developed by Brodsky and Rinot \cite{proxy} may be useful for the following question.

\begin{question}
In G\"odel's constructible universe $L$, is there a non-trivially non-trivial automorphism on $\Pw(\omega_1)/\Fin$? or on $\Pw(\kappa)/\Fin$ when $\kappa$ is the least inaccessible cardinal (if it exists)?
\end{question}

Recall that for an infinite cardinal $\kappa$, $\ell_2(\kappa)$ denote the Hilbert space $\{(z_j)\in \mathbb{C}^\kappa: \sum_j |z_j|^2<\infty\}$. See \cite{CoronaRigidity} for an extensive survey. A variation of the model from Theorem \ref{theorem: main} may shed light on the following question.
\begin{question}
Is it consistent that there is an uncountable $\kappa$ such that the Calkin algebra $\mathcal{C}(\ell_2(\kappa))$ (the C*-algebra of bounded, linear operators on $\ell_2(\kappa)$ modulo the compact operators) admits an outer automorphism?
\end{question}

\section{Acknowledgment}
We want to thank Assaf Rinot for stimulating discussions on this subject, particularly during the Thematic Program on Set
Theoretic Methods in Algebra, Dynamics and Geometry at the Fields Institute in
the spring of 2023. We thank the organizers and the Fields Institute for their hospitality. We also want to thank Juris Stepr\={a}ns for bringing the question to our attention.

All the mathematical ideas are due to the authors. The assistance of AI (Chatgpt 5.6 and Fable 5) is exclusively in transcribing handwritten notes and proofreading. The second author thanks Binhui Zhang for the support.

\bibliographystyle{alpha}
\bibliography{bib}

%
%
%
%

\end{document}